\newtheorem{theorem}{Theorem}[section]
\newtheorem{lemma}[theorem]{Lemma}
\newtheorem{proposition}[theorem]{Proposition}
\newtheorem{corollary}[theorem]{Corollary}
\theoremstyle{definition}
\newtheorem{definition}[theorem]{Definition}
\newtheorem{fact}[theorem]{Fact}
\newtheorem*{remark}{Remark}
\newtheorem*{example}{Example}
\DeclareMathOperator{\Ext}{Ext}
\DeclareMathOperator{\Hom}{Hom}
\DeclareMathOperator{\cok}{cok}
\newcommand{\cat}[1]{\mathcal{#1}}           
\newcommand{\class}[1]{\mathcal{#1}}   
\newcommand{\mathcolon}{\colon\,} 
\newcommand{\ar}{\xrightarrow{}} 
\begin{document}

\title{Model structures on exact categories}

\date{\today}

\author{James Gillespie}
\address{505 Ramapo Valley Road \\
         Ramapo College of New Jersey \\
         Mahwah, NJ 07430}
\email[Jim Gillespie]{jgillesp@ramapo.edu}

\subjclass{18E30,  
       18G35,  
         55U15,  
           55U35}	   

\begin{abstract}
We define model structures on exact categories which we call exact model structures. We look at the relationship between these model structures and cotorsion pairs on the exact category. In particular, when the underlying category is weakly idempotent complete we get Hovey's one-to-one correspondence between model structures and complete cotorsion pairs. We classify the right and left homotopy relation in terms of the cotorsion pairs and look at examples of exact model structures. In particular, we see that given any hereditary abelian model category, the full subcategories of cofibrant, fibrant and cofibrant-fibrant subobjects \emph{each} have natural exact model structures equivalent to the original model structure. These model structures each have interesting characteristics. For example, the cofibrant-fibrant subobjects form a Frobenius category whose stable category is the same thing as the homotopy category of its model structure.
\end{abstract}

\maketitle


\section{Introduction and Preliminaries}

Exact categories were introduced by Quillen in~\cite{quillen-algebraic K-theory}. These are additive categories which may not have all kernels and cokernels but which have enough structure to allow for a notion of ``short exact sequences''. The axioms allow for a rather thorough treatment of homological algebra analogous to the traditional theory in an abelian category. For example, see~\cite{buhler-exact categories}. But homological algebra itself is encompassed in Quillen's notion of a model category and so there ought to be model structures on exact categories describing homological algebra in these categories.

So in this paper we define and make a brief study of exact model structures. These are exact categories with a model structure that is compatible in a nice way with the short exact sequences. The precise statement is Definition~\ref{def-exact model structures} and is entirely analogous to Hovey's definition of an abelian model category which appeared in~\cite{hovey}. However, a model category is usually assumed to have, at least, all finite limits and colimits and more often now assumed to have all small limits and colimits. As we explain in more detail in the beginning of Section~\ref{sec-the homotopy category of an exact model structure}, we don't need the full finite limit and colimit assumptions to obtain the standard introductory results of homotopy theory, including the fundamental result on localization with respect to the class of weak equivalences. In fact all the limits and colimits needed already come in the definition of an exact category. Therefore we use the term \emph{exact model structure} and reserve \emph{exact model category} for when the category does come equipped with all small limits and colimits.

We can also define cotorsion pairs in exact categories and we see that Hovey's correspondence between abelian model structures and cotorsion pairs naturally carries over to a correspondence between exact model structures and cotorsion pairs. However, only one direction of the correspondence seems to hold for a general exact model structure. To have a perfect one-to-one correspondence between exact model structures and cotorsion pairs we need to assume the exact category is ``weakly idempotent complete''. This means the exact category has cokernels of all split monos and kernels of all split epis. More details are given in Section~\ref{subsec-weakly idempotent complete exact categories}.

This correspondence between model structures and cotorsion pairs provides a way to translate standard language from the theory of model categories to purely algebraic ideas and it is interesting to attempt to characterize certain model category notions in terms of cotorsion pairs. We find a nice algebraic characterization of left and right homotopy which appears as Proposition~\ref{prop-left and right homotopic maps in exact model structures} and Corollary~\ref{cor-characterizations of homotopic maps in inj. proj. Frob. model structures}.

We show that for an hereditary abelian model category (or more generally an hereditary exact model structure), the full subcategories of cofibrant, fibrant, and cofibrant-fibrant subobjects come equipped with their own exact model structures describing their usual homotopy categories. We can think of these as canonical sub-model structures and each one has its own interesting characteristics. The cofibrant objects have an equivalent ``injective'' sub-model structure, the fibrant objects have an equivalent ``projective'' sub-model structure and the cofibrant-fibrant subobjects have an equivalent ``Frobenius'' sub-model structure structure.

We assume the reader is interested in model categories or in the interactions between homotopy theory and algebra, but otherwise the paper is believed to be self-contained. Definitions or proper references will be given as they are needed. Section~\ref{sec-exact categories} concerns definitions and basic results on exact categories, cotorsion pairs in exact categories and weakly idempotent complete exact categories. Section~\ref{sec-exact model structures} concerns the correspondence between exact model structures and cotorsion pairs in weakly idempotent complete exact categories. In Section~\ref{sec-the homotopy category of an exact model structure} we characterize the left and right homotopy relation in terms of the cotorsion pairs. We also define and look at projective, injective and Frobenius model structures. Finally, Section~\ref{sec-examples and applications} mainly concerns the examples of the canonical sub-model structures of an hereditary exact model structure, but we also look at how classical homotopy theory of chain complexes fits into our setup.

The author would like to thank Sergio Estrada for asking questions which led to this paper and for pointing out the notion of an exact category. Thanks to T.~B\"uhler for his nicely written monograph~\cite{buhler-exact categories} which I found to be a great read on exact categories.

\section{Exact categories}\label{sec-exact categories}

An exact category is a pair $(\class{A},\class{E})$ where $\class{A}$ is an additive category and $\class{E}$ is a class of ``short exact sequences'': That is, triples of objects connected by arrows $A \xrightarrow{i} B \xrightarrow{p} C$ such that $i$ is the kernel of $p$ and $p$ is the cokernel of $i$. A map such as $i$ is necessarily a monomorphism and in the language of exact categories is called an \emph{admissible monomorphism} while $p$ is called an \emph{admissible epimorphism}. The class $\class{E}$ of short exact sequences must satisfy the following axioms which are inspired by the properties of short exact sequences in any abelian category:

\begin{enumerate}

\item $\class{E}$ is closed under isomorphisms.

\item $\class{E}$ contains each of the canonical split exact sequences $A \rightarrow A \oplus B \rightarrow B$.

\item Any pushout of an admissible monomorphism exists and admissible monos are stable under pushouts. Similarly, any pullback of an admissible epimorphism exists and admissible epis are stable under pullbacks.

\item Admissible monomorphisms are closed under compositions. Similarly, admissible epimorphisms are closed under compositions.

\end{enumerate}

We sometimes denote admissible monomorphisms by $\rightarrowtail$ and denote admissible epimorphisms by $\twoheadrightarrow$.
These axioms are equivalent to Quillen's original definition in~\cite{quillen-algebraic K-theory}. See B\"uhler's recent paper~\cite{buhler-exact categories} for a very thorough and readable exposition on exact categories.

Given any additive category $\class{A}$, we may take $\class{E}$ to be all split exact sequences to get a (trivial) exact category $(\class{A},\class{E})$. However, taking $\class{E}$ to be \emph{all} short exact sequences that already exist in $\class{A}$ will not, in general, define an exact category. Certainly it does when $\class{A}$ is abelian. The typical exact category arises as a full subcategory of an abelian category: Given any strictly full subcategory $\class{A}$ of an abelian category $\class{B}$, in which $\class{A}$ is closed under extensions, gives an exact category $(\class{A},\class{E})$ where $\class{E}$ consists of the short exact sequences from $\class{B}$ in which all three terms are objects in $\class{A}$. Conversely, any exact category may be embedded inside an abelian category. See~\cite{buhler-exact categories} for more details on all of the above.

\subsection{Cotorsion pairs in exact categories} Let $\class{A} = (\class{A},\class{E})$ be an exact category. In analogy to abelian categories, the axioms allow for the usual construction of the Yoneda Ext bifunctor $\Ext^1_{\class{A}}(M,N)$. It is the abelian group of equivalence classes of short exact sequences $N \rightarrowtail Z \twoheadrightarrow M$. See Chapter~XII.4 of MacLane~\cite{homology} for details on the construction of the Yoneda Ext bifunctor. In particular, we get that $\Ext^1_{\class{A}}(M,N) = 0$ if and only if every short exact sequence $N \rightarrowtail Z \twoheadrightarrow M$ is isomorphic to the split exact sequence $N \rightarrowtail N \oplus M \twoheadrightarrow M$.

We say an object $I \in \class{A}$ is injective if any admissible monomorphism $I\rightarrowtail Z$ splits (has a left inverse). Since the arrow $I \rightarrowtail Z$ has a cokernel,  $I$ must be a direct summand of $Z$ by (the dual of) the argument in Remark~7.4 of~\cite{buhler-exact categories}. From this one can see $I$ is injective if and only if $\Ext^1_{\class{A}}(M,I) = 0$ for any $M \in \class{A}$. Projective objects are defined dually. (Perhaps these objects should be called ``admissible injectives'' and ``admissible projectives'', since they may not be categorically injective or projective. But it seems as though the language is standard for exact categories). The definition of a cotorsion pair also readily generalizes to exact categories.

\begin{definition}\label{def-cotorsion pair in an exact category}
A pair of classes $(\class{F},\class{C})$ in an exact category
$\cat{A}$ is a cotorsion pair if the following conditions hold:
\begin{enumerate}
    \item $\Ext^1_{\cat{A}}(F,C) = 0$ for all $F \in \class{F}$ and $C \in
    \class{C}$.

    \item If $\Ext^1_{\cat{A}}(F,X) = 0$ for all $F \in \class{F}$, then $X
    \in \class{C}$.

    \item If $\Ext^1_{\cat{A}}(X,C) = 0$ for all $C \in \class{C}$, then $X \in
    \class{F}$.
\end{enumerate}
We say the cotorsion pair is \emph{hereditary} if $\class{F}$ is closed under taking kernels of epimorphisms between objects of $\class{F}$ and if $\class{C}$ is closed under taking cokernels of monomorphisms between objects of $\class{C}$.
\end{definition}

In particular, letting $\class{P}$ be the class of projective objects in $\class{A}$ and $\class{I}$ be the class of injective objects in $\cat{A}$, one has the projective cotorsion pair
$(\class{P},\class{A})$ and the injective cotorsion pair
$(\class{A},\class{I})$.

A cotorsion pair is said to have \emph{enough projectives} if for
any $X \in \cat{A}$ there is a short exact sequence $C \rightarrowtail F \twoheadrightarrow X $ where $C \in
\class{C}$ and $F \in \class{F}$. We say it has \emph{enough injectives} if it satisfies the dual statement. If both of these hold
we say the cotorsion pair is \emph{complete}. If the cotorsion pair has enough projectives in a way that is functorial with respect to $X$ then we say the cotorsion pair has \emph{enough functorial projectives}. Similarly, we have the terms \emph{enough functorial injectives} and \emph{functorially complete}.

A functor between exact categories is called exact if it preserves short exact sequences. In particular, one can show in the usual way that $\Hom_{\class{A}}(P,-) \mathcolon \class{A} \ar \textbf{Ab}$ is exact if and only if $P$ is projective and the contravariant $\Hom_{\class{A}}(-,I)$ is exact if and only if $I$ is injective. More generally, for any $X \in \class{A}$, the functor $\Hom_{\class{A}}(X,-) \mathcolon \class{A} \ar \textbf{Ab}$, sends short exact sequences to left exact sequences. Similarly the contravariant $\Hom_{\class{A}}(-,X)$ sends right exact sequences to left exact sequences. As in B\"uhler's Remark~12.11 of~\cite{buhler-exact categories} we can define left and right derived functors satisfying the usual properties when the exact category has enough projectives and injectives. In particular, $\Ext^1_{\class{A}}$ above must coincide with the 1st right derived functor of $\Hom_{\class{A}}$ and fit into the usual long exact sequence.

\subsection{Weakly idempotent complete exact categories}\label{subsec-weakly idempotent complete exact categories}

We are interested in the problem of defining model structures on exact categories. The cofibrations will be admissible monomorphisms with certain cokernels and the fibrations will be admissible epimorphisms with certain kernels. Part of the requirement for a model structure is that the cofibrations and fibrations be closed under retracts. So we want admissible monomorphisms and admissible epimorphisms to be closed under retracts. We now show that this is equivalent to insisting that the additive category $\class{A}$ is weakly idempotent complete. The author learned of weakly idempotent complete exact categories from~\cite{buhler-exact categories}.
Except for our main Proposition~\ref{prop-weak idempotent completeness and retracts}, all of the ideas here can be found in Section~7 of~\cite{buhler-exact categories}.

The main idea is very simple. It is not automatic that split monomorphisms (those with left inverses) in exact categories be admissible monomorphisms and that split epimorphisms (those with right inverses) be admissible epimorphisms. This stems from the fact that split monomorphisms in additive categories need not automatically have cokernels, and the dual for split epimorphsms. The concept of weak idempotent completeness removes this problem.

Let $f \mathcolon X \ar Y$ and $g \mathcolon Y \ar X$. Recall that if $gf = 1_X$ then we call $f$ a \emph{split monomorphism}, or a \emph{section} and $g$ a \emph{split epimorphism}, or a \emph{retraction}. A split monomorphism is a monomorphism since it is easily seen to be left cancellable while a split epimorphism is an epimorphism since it is right cancellable. B\"uhler calls a split epimorphism a \emph{retraction} and a split monomorphism a \emph{coretraction} and has a different usage of the word ``section''.

\begin{definition}
We call an additive category $\class{A}$ \emph{weakly idempotent complete} if every split monomorphism has a cokernel and every split epimorphism has a kernel. We call an exact category $(\class{A},\class{E})$ \emph{weakly idempotent complete} if the additive category $\class{A}$ is such.
\end{definition}

For example, the category of free $R$-modules is an exact category which is not weakly idempotent complete whenever there are projective modules which are not free. The ``Eilenberg Swindle'' can be used to see this. The following Proposition summarizes all the results we will need on weakly idempotent complete exact categories. Clear proofs can be found in Section~7 of~\cite{buhler-exact categories}.

\begin{proposition}\label{prop-properties of weakly idempotent complete categories}
Let $(\class{A},\class{E})$ be a weakly idempotent complete exact category. The the following hold:
\begin{enumerate}

\item Every split monomorphism $f \mathcolon X \ar Y$ is an admissible monomorphism. Letting $Y \ar Z$ be its cokernel, the sequence $X \rightarrowtail Y \twoheadrightarrow Z$ is isomorphic to the split exact sequence $X \rightarrowtail X \oplus Z \twoheadrightarrow Z$.

\item Every split epimorphism $g \mathcolon Y \ar Z$ is an admissible epimorphism. Letting $K \ar Y$ be its kernel, the sequence $K \rightarrowtail Y\twoheadrightarrow Z$ is isomorphic to the split exact sequence $K \rightarrowtail K \oplus Z \twoheadrightarrow Z$.

\item If $gf$ is an admissible monomorphism, then $f$ is an admissible monomorphism.

\item If $gf$ is an admissible epimorphism, then $g$ is an admissible epimorphism.
\end{enumerate}
\end{proposition}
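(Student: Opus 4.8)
These four statements form two dual pairs; I would prove (1) and (3) and then dualize for (2) and (4), systematically replacing split monomorphisms, cokernels and pushouts by split epimorphisms, kernels and pullbacks. For (1), let $f\mathcolon X\to Y$ be a split monomorphism with retraction $r$, so $rf=1_X$, and let $c\mathcolon Y\to Z$ be a cokernel of $f$, which exists by weak idempotent completeness. The plan is to show the triple $X\xrightarrow{f}Y\xrightarrow{c}Z$ is isomorphic to the canonical split short exact sequence $X\rightarrowtail X\oplus Z\twoheadrightarrow Z$ of axiom (2); since $\class{E}$ is closed under isomorphism, this simultaneously gives that $f$ is an admissible monomorphism and the asserted identification. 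The comparison map is $\binom{r}{c}\mathcolon Y\to X\oplus Z$; its inverse is $(\,f\ \ s\,)\mathcolon X\oplus Z\to Y$, where $s\mathcolon Z\to Y$ is the unique factorization of $1_Y-fr$ through $c$ (legitimate because $(1_Y-fr)f=0$). That these are mutually inverse, and that $\binom{r}{c}$ carries one sequence onto the other, is a short matrix computation in which the only nonformal input is that $c$ is an epimorphism (used to deduce $cs=1_Z$ and $rs=0$).

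For (3), suppose $gf\mathcolon X\to Z$ is an admissible monomorphism, with $f\mathcolon X\to Y$ and $g\mathcolon Y\to Z$. I would appeal to Quillen's ``obscure'' axiom, which holds in every exact category: a morphism that admits a cokernel and becomes an admissible monomorphism after postcomposition with some morphism is itself an admissible monomorphism. Here $gf$ already witnesses the postcomposition condition for $f$, so the whole point is to show that $f$ has a cokernel. To that end, form the pushout of the admissible monomorphism $gf$ along $f$ (it exists by axiom (3)):
\[
\begin{CD}
X @>{gf}>> Z \\
@V{f}VV @VV{q}V \\
Y @>{p}>> P
\end{CD}
\]
The pair $(1_Z\mathcolon Z\to Z,\ g\mathcolon Y\to Z)$ agrees on $X$ (both composites $X\to Z$ equal $gf$), so the universal property yields $w\mathcolon P\to Z$ with $wq=1_Z$; thus $q$ is a split monomorphism and, by weak idempotent completeness, has a cokernel $\pi\mathcolon P\to D$ — this is the one place the hypothesis is used. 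A direct verification of the universal property then shows $Y\xrightarrow{p}P\xrightarrow{\pi}D$ is a cokernel of $f$: a map out of $P$ that kills $q$ is, by the pushout property, the same as a map out of $Y$ that kills $f$. Hence $f$ admits a cokernel, and the obscure axiom delivers that $f$ is an admissible monomorphism. For (4) one dualizes: given $gf$ an admissible epimorphism, form the pullback of $gf$ along $g$; the pair $(1_X,\ f)$ splits the projection from the pullback onto $X$, so that projection has a kernel by weak idempotent completeness, and this kernel is a kernel of $g$; the dual obscure axiom (a morphism admitting a kernel and becoming an admissible epimorphism after precomposition is an admissible epimorphism) then shows $g$ is an admissible epimorphism.

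The matrix identities in (1) and the cokernel/kernel verifications in (3)--(4) are routine; the only ingredients beyond the exact-category axioms are weak idempotent completeness — used exactly to produce the cokernel of the split monomorphism $q$, and dually the kernel of a split epimorphism — and the obscure axiom. The step demanding the most care is the invocation of the obscure axiom: one must check that its hypothesis is merely ``$f$ admits a cokernel'', which is all the pushout provides, and that $D=\cok q$ really functions as $\cok f$, with structure map the composite $Y\xrightarrow{p}P\xrightarrow{\pi}D$. A reader who prefers to avoid citing the obscure axiom can instead observe that the pushout square is bicartesian, hence a pullback, and combine this with the splitting $P\cong Z\oplus D$ from part (1) to realize $f$ as the kernel of the induced map $Y\to D$; but a $3\times 3$-lemma argument is then still needed to see that $X\xrightarrow{f}Y\to D$ lies in $\class{E}$, which amounts to the obscure axiom once more.
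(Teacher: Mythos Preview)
Your argument is correct. Note, however, that the paper does not actually prove this proposition: it simply states ``Clear proofs can be found in Section~7 of~\cite{buhler-exact categories}.'' So there is no in-paper proof to compare against; your write-up supplies what the paper omits.

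Your treatment of (1) is the standard one (essentially B\"uhler's Remark~7.4). For (3), your pushout construction is a nice, self-contained way to manufacture a cokernel for $f$ so that the obscure axiom applies; the verification that $\pi p$ is a cokernel of $f$ via the bijection ``maps out of $P$ killing $q$'' $\leftrightarrow$ ``maps out of $Y$ killing $f$'' is exactly right. B\"uhler's own argument for the cancellation statement (his Proposition~7.6) is organized slightly differently---he works with the direct sum $Y\oplus Z$ and an automorphism built from $g$ rather than forming a pushout---but the essential content is the same, and both routes ultimately rest on weak idempotent completeness plus the obscure axiom. Your closing remark about the bicartesian alternative is accurate but, as you note, does not really avoid the obscure axiom.
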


The last two properties in the above proposition are easy to remember since they are analogous to standard facts about monomorphisms and epimorphisms in any category.

We now see why we want $(\class{A},\class{E})$ to be weakly idempotent complete when we wish to build an exact model structure on $(\class{A},\class{E})$. We need admissible monomorphisms and admissible epimorphisms to be closed under retracts.

\begin{proposition}\label{prop-weak idempotent completeness and retracts}
Let $(\class{A},\class{E})$ be an exact category. Then the following are equivalent:
\begin{enumerate}
\item $(\class{A},\class{E})$ is weakly idempotent complete.

\item Admissible monomorphisms are closed under retracts.

\item Admissible epimorphisms are closed under retracts.
\end{enumerate}
\end{proposition}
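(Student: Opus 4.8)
The plan is to run the implications $(1)\Rightarrow(2)$, $(1)\Rightarrow(3)$ off the bat from Proposition~\ref{prop-properties of weakly idempotent complete categories}, and then prove the two substantive converses $(2)\Rightarrow(1)$ and $(3)\Rightarrow(1)$ by an explicit retract construction. For $(1)\Rightarrow(2)$, suppose $f\colon X\to Y$ is a retract of an admissible monomorphism $g\colon X'\to Y'$, witnessed by $a\colon X\to X'$, $a'\colon X'\to X$, $b\colon Y\to Y'$, $b'\colon Y'\to Y$ with $a'a=1_X$, $b'b=1_Y$, $ga=bf$ and $fa'=b'g$. Then $a$ is a split monomorphism, hence an admissible monomorphism by part (1) of the Proposition, so $bf=ga$ is a composite of admissible monomorphisms and therefore an admissible monomorphism; since $b$ is a split monomorphism, part (3) applied to the composite $bf$ yields that $f$ is an admissible monomorphism. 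The implication $(1)\Rightarrow(3)$ is the formal dual, using parts (2) and (4) and the split epimorphisms $b'$ and $a'$.

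For $(2)\Rightarrow(1)$, let $f\colon X\to Y$ be a split monomorphism with retraction $r\colon Y\to X$, so $rf=1_X$. The key point is that $f$ is a retract of the canonical admissible monomorphism $i_X\colon X\rightarrowtail X\oplus Y$ supplied by axiom (2): the maps $\binom{r}{1_Y-fr}\colon Y\to X\oplus Y$ and $(f,\ 1_Y)\colon X\oplus Y\to Y$, together with $1_X$ on the sources, assemble into a retract diagram, since one checks directly (using only $rf=1_X$) that $\binom{r}{1_Y-fr}\circ f=i_X$, that $(f,\ 1_Y)\circ i_X=f$, and that $(f,\ 1_Y)\circ\binom{r}{1_Y-fr}=1_Y$. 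By hypothesis (2), $f$ is then an admissible monomorphism, hence the left-hand map of a short exact sequence, so it has a cokernel. Thus every split monomorphism has a cokernel. For the other half of weak idempotent completeness, given a split epimorphism $g\colon Y\to Z$ with section $s$, the section $s$ is a split monomorphism and so has a cokernel $q\colon Y\to C$; a short diagram chase with $e:=1_Y-sg$ (using $qs=0$, $gs=1_Z$, $qe=q$, $ge=0$) shows that the induced factorization $\bar e\colon C\to Y$ of $e$ through $q$ satisfies $q\bar e=1_C$ and is a kernel of $g$. Hence $(\class{A},\class{E})$ is weakly idempotent complete. The implication $(3)\Rightarrow(1)$ is entirely dual: a split epimorphism is a retract of the canonical admissible epimorphism $X\oplus Y\twoheadrightarrow Y$, hence admissible and in particular has a kernel, and the dual diagram chase then produces cokernels of split monomorphisms.

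The only non-formal step is the retract construction in $(2)\Rightarrow(1)$ and its dual: guessing that $f$ should be retracted onto the canonical split inclusion $X\rightarrowtail X\oplus Y$ and finding the correcting maps $\binom{r}{1_Y-fr}$ and $(f,\ 1_Y)$. Everything after that is bookkeeping with the exact-category axioms and Proposition~\ref{prop-properties of weakly idempotent complete categories}. I would also flag (and may simply cite Section~7 of~\cite{buhler-exact categories} for) the elementary fact that in any additive category the existence of cokernels of all split monomorphisms is equivalent to the existence of kernels of all split epimorphisms; this is what makes the asymmetric hypotheses (2) and (3) each sufficient to recover the full definition of weak idempotent completeness.
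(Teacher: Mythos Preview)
Your proof is correct and follows the same overall strategy as the paper: for $(1)\Rightarrow(2)$ both arguments use Proposition~\ref{prop-properties of weakly idempotent complete categories} in exactly the same way, and for $(2)\Rightarrow(1)$ both exhibit an arbitrary split monomorphism as a retract of some admissible monomorphism. The difference lies in the choice of that admissible monomorphism. The paper retracts $f\colon A\to C$ onto the ``twisted'' inclusion $\binom{1_A}{-f}\colon A\to A\oplus C$, which requires a preliminary argument (via an explicit isomorphism with the canonical split sequence) to see that this map is admissible. Your construction instead retracts $f$ directly onto the canonical inclusion $i_X\colon X\rightarrowtail X\oplus Y$, which is admissible immediately from axiom~(2); the correcting maps $\binom{r}{1_Y-fr}$ and $(f,\,1_Y)$ do the job with no extra setup. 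This is a genuine simplification. For the residual step---that cokernels of split monomorphisms already give kernels of split epimorphisms---the paper simply cites Corollary~7.5 of~\cite{buhler-exact categories}, whereas you supply the short diagram chase with $e=1_Y-sg$; both are fine, and your direct argument keeps the proof self-contained.
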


\begin{proof}
(1) $\Rightarrow$ (2). Suppose $(\class{A},\class{E})$ is weakly idempotent complete and suppose we have the commutative diagram below expressing $j \mathcolon A \ar B$ as a retract of $i \mathcolon X \rightarrowtail Y$.
$$\begin{CD}
A    @>f_1>> X  @>g_1>> A \\
@VjVV @VViV  @VVjV \\
B     @>f_2>> Y @>g_2>> B\\
\end{CD}$$
So $g_1f_1 = 1_A$ and $g_2f_2 = 1_B$. In particular, $f_1$ is a split monomorphism. So by Proposition~\ref{prop-properties of weakly idempotent complete categories}, $f_1$ is an admissible monomorphism. By the composition axiom $if_1 = f_2j$ is an admissible monomorphism. So by~\ref{prop-properties of weakly idempotent complete categories} again we get $j$ is an admissible monomorphism. The proof of (1) $\Rightarrow$ (3) is similar to (1) $\Rightarrow$ (2).

(2) $\Rightarrow$ (1). Suppose that the class of admissible monomorphisms is closed under retracts. We wish to show that $(\class{A},\class{E})$ is weakly idempotent complete. By Corollary~7.5 of~\cite{buhler-exact categories} we just need to show that every split monomorphism is an admissible monomorphism. So let $f \mathcolon A \ar C$ be a split monomorphism. So there is a map $g \mathcolon C \ar A$ with $gf = 1_A$. First, we note that the map $A \ar A \oplus B$ defined by the matrix $\binom{1_A}{-f}$ is an admissible monomorphism since the diagram below puts it in a sequence which is isomorphic to the split exact sequence $A \rightarrowtail A \oplus C \twoheadrightarrow C$.
 $$\begin{CD}
A    @> \binom{1_A}{-f} >> A \oplus C  @> (f \ 1_C) >> C \\
@| @VVV  @| \\
A     @>\binom{1_A}{0} >> A \oplus C @> (0  \ 1_C) >> C \\
\end{CD}$$
The middle vertical arrow above is the map $\left( \begin{array}{c}
 1_A \ \ 0 \\
 f \ \ 1_C \\
 \end{array} \right)$ which has the inverse $\left( \begin{array}{c}
 1_A \ \ 0 \\
 -f \ \ 1_C \\
 \end{array} \right)$. Now by hypothesis we will be done if we can show that $f \mathcolon A \ar C$ is a retract of $\binom{1_A}{-f}$. But the commutative diagram below displays that this is indeed the case.

$$\begin{CD}
A    @= A  @= A \\
@VfVV @VV\binom{1_A}{-f}V  @VVfV \\
C     @>\binom{g}{-1_C}>> A \oplus C @> (f \ \ fg-1_C) >> C\\
\end{CD}$$

\end{proof}

\section{Exact model structures}\label{sec-exact model structures}

Hovey defined in~\cite{hovey}, Definition~2.1, the notion of an abelian model structure. This is a model structure on an abelian category in which (trivial) cofibrations are monomorphism with (trivially) cofibrant cokernel and (trivial) fibrations are epimorphisms with (trivially) fibrant kernel. We make a similar definition for model structures on exact categories. First we define some terminology from the theory of model categories.

Note that if $(\class{A},\class{E})$ is an exact category, then for any $X \in \class{A}$, $0 \rightarrowtail X$ is an admissible monomorphism and $X \twoheadrightarrow 0$ is an admissible epimorphism. Now suppose $(\class{A},\class{E})$ has a model structure as defined in Definition~1.1.3 of~\cite{hovey-model-categories}.
\begin{itemize}
\item We say $X$ is \emph{trivial} if $0 \rightarrowtail X$ is a weak equivalence. By the 2 out of 3 axiom and the fact that identity maps are always weak equivalences, this is equivalent to insisting $X \twoheadrightarrow 0$ is a weak equivalence.

\item We say $X$ is \emph{cofibrant} if $0 \rightarrowtail X$ is a cofibration.

\item We say $X$ is \emph{fibrant} if $X \twoheadrightarrow 0$ is a fibration.

\item We say $X$ is \emph{trivially cofibrant} if it is both trivial and cofibrant.

\item We say $X$ is \emph{trivially fibrant} if it is both trivial and fibrant.
\end{itemize}

Recall that in a model category, a \emph{trivial cofibration} is a map which is both a cofibration and a weak equivalence. Thus an object $X$ is trivially cofibrant if and only if $0 \rightarrowtail X$ is a trivial cofibration.  Of course, the analogous is true for trivial fibrations. We are now ready to define an exact model structure.

\begin{definition}\label{def-exact model structures}
Let $(\class{A},\class{E})$ be an exact category. An \emph{exact model structure} on $(\class{A},\class{E})$ is a model structure in the sense of Definition~1.1.3 of~\cite{hovey-model-categories} in which each of the following hold.
\begin{enumerate}
\item A map is a (trivial) cofibration if and only if it is an admissible monomorphism with a (trivially) cofibrant cokernel.
\item A map is a (trivial) fibration if and only if it is an admissible epimorphism with a (trivially) fibrant kernel.
\end{enumerate}
\end{definition}

Note that with any model structure, by the 2 out of 3 axiom, a map $g$ is a weak equivalence if and only if it has a factorization $g = pi$ where $i$ is a trivial cofibration and $p$ is a trivial fibration. So in an exact model structure, a map $g$ is a weak equivalence if and only if it has a factorization $g = pi$ where $i$ is an admissible monomorphism with trivially cofibrant cokernel and $p$ is an admissible epimorphism with trivially fibrant kernel.

\subsection{Hovey's correspondence}

Hovey's correspondence theorem is a translation between model structures on an abelian category $\cat{A}$ and two compatible complete cotorsion pairs on $\cat{A}$. We now point out that this correspondence also works when $\cat{A}$ is a a weakly idempotent complete exact category. We will need the definition of a thick subcategory.

\begin{definition}
Given an exact category $(\class{A},\class{E})$, by a \emph{thick subcategory} of $\cat{A}$ we mean a class of objects $\class{W}$ which is closed under direct summands and such that if two out of three of the terms in a short exact sequence are in $\class{W}$, then so is the third.
\end{definition}

\textbf{Note}: The definition of a thick subcategory usually states that $\class{W}$ be closed under retracts instead of saying direct summands. These are clearly the same if $\class{A}$ is abelian, or weakly idempotent complete. But for a general exact category there is a subtlety. In this case one can show $\class{W}$ is closed under direct summands if and only if given any \emph{admissible} monomorphism $f \mathcolon X \rightarrowtail W$ with $W \in \class{W}$, if $f$ splits then $X \in \class{W}$. [Similar to Remark~7.4 of~\cite{buhler-exact categories}, one can show that such an admissible split monomorphism gives rise to a direct sum decomposition of $W$.]

\begin{theorem}\label{them-Hovey's theorem for exact categories}
Let $(\class{A},\class{E})$ be an exact category with an exact model structure. Let $\class{Q}$ be the class of cofibrant objects, $\class{R}$ the class of fibrant objects and $\class{W}$ the class of trivial objects. Then $\class{W}$ is a thick subcategory of $\cat{A}$ and both $(\class{Q},\class{R} \cap \class{W})$ and $(\class{Q} \cap \class{W} , \class{R})$ are complete cotorsion pairs in $\cat{A}$. If we assume $(\class{A},\class{E})$ is weakly idempotent complete then the converse holds. That is, given two compatible cotorsion pairs $(\class{Q},\class{R} \cap \class{W})$ and $(\class{Q} \cap \class{W} , \class{R})$, each complete and with $\class{W}$ a thick subcategory, then there is an exact model structure on $\cat{A}$ where $\class{Q}$ are the cofibrant objects, $\class{R}$ are the fibrant objects and $\class{W}$ are the trivial objects.
\end{theorem}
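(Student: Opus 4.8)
The plan is to prove the two implications separately. The forward direction---an exact model structure produces the two complete cotorsion pairs with $\class{W}$ thick---should use only the model category axioms together with the compatibility of (trivial) cofibrations and fibrations with short exact sequences. The converse is the construction direction, and there weak idempotent completeness, in the forms recorded in Propositions~\ref{prop-properties of weakly idempotent complete categories} and~\ref{prop-weak idempotent completeness and retracts}, is essential; I expect the real work of the theorem to lie there.

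\emph{Forward direction.} Since cofibrations, fibrations and weak equivalences are each closed under retracts, the classes $\class{Q}$, $\class{R}$, $\class{W}$ of cofibrant, fibrant and trivial objects are each closed under retracts, hence under direct summands; this already gives the ``direct summand'' half of thickness. For the two-out-of-three half, the key sub-lemma I would prove first is: an admissible monomorphism $A \rightarrowtail B$ whose cokernel lies in $\class{W}$ is a weak equivalence (and dually for an admissible epimorphism with kernel in $\class{W}$). To get this, use the factorization axiom to write $0 \rightarrowtail QC \twoheadrightarrow C$ with $C = \cok(A \rightarrowtail B)$, $0 \rightarrowtail QC$ a cofibration and $QC \twoheadrightarrow C$ a trivial fibration; since $C$ is trivial, two-out-of-three forces $QC$ to be trivially cofibrant. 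Pulling $B \twoheadrightarrow C$ back along $QC \twoheadrightarrow C$ yields an object $P$ sitting in short exact sequences $A \rightarrowtail P \twoheadrightarrow QC$ and $\ker(QC \twoheadrightarrow C) \rightarrowtail P \twoheadrightarrow B$ whose composite $A \rightarrowtail P \twoheadrightarrow B$ is the original map; the first sequence makes $A \rightarrowtail P$ a trivial cofibration and the second makes $P \twoheadrightarrow B$ a trivial fibration, so $A \rightarrowtail B$ is a weak equivalence. Granting the sub-lemma and its dual, the three cases of two-out-of-three for a short exact sequence $A \rightarrowtail B \twoheadrightarrow C$ follow by applying two-out-of-three for weak equivalences to $0 \rightarrowtail A \rightarrowtail B$ or $0 \rightarrowtail B \twoheadrightarrow C$, using the sub-lemma to pass between ``a term lies in $\class{W}$'' and ``the neighbouring admissible mono or epi is a weak equivalence''. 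For the cotorsion pair axioms, the vanishing $\Ext^1_{\cat{A}}(\class{Q},\class{R}\cap\class{W}) = 0$ and $\Ext^1_{\cat{A}}(\class{Q}\cap\class{W},\class{R}) = 0$ is immediate from the lifting axiom: in a short exact sequence $X \rightarrowtail Z \twoheadrightarrow Q$ with $X$ trivially fibrant and $Q$ cofibrant the map $Z \twoheadrightarrow Q$ is a trivial fibration which $0 \rightarrowtail Q$ lifts against, splitting the sequence; and symmetrically. The remaining conditions of the two cotorsion pairs, as well as completeness, all come from one device: factor $0 \rightarrowtail X$ or $X \twoheadrightarrow 0$ either as a cofibration followed by a trivial fibration or as a trivial cofibration followed by a fibration. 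Each such factorization is a short exact sequence witnessing one of the enough-projectives or enough-injectives statements; and when the relevant $\Ext^1_{\cat{A}}$ vanishes the sequence splits, exhibiting $X$ as a direct summand of an object of $\class{Q}$, $\class{Q}\cap\class{W}$, $\class{R}$ or $\class{R}\cap\class{W}$, each of which is summand-closed by the above.

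\emph{Converse.} Assume given $\class{Q},\class{R},\class{W}$ with $(\class{Q},\class{R}\cap\class{W})$ and $(\class{Q}\cap\class{W},\class{R})$ complete cotorsion pairs and $\class{W}$ thick. Declare a map to be a \emph{(trivial) cofibration} if it is an admissible monomorphism with cokernel in $\class{Q}$ (resp.\ $\class{Q}\cap\class{W}$), a \emph{(trivial) fibration} if it is an admissible epimorphism with kernel in $\class{R}$ (resp.\ $\class{R}\cap\class{W}$), and a \emph{weak equivalence} if it factors as a trivial cofibration followed by a trivial fibration; then I would verify the axioms of Definition~1.1.3 of~\cite{hovey-model-categories}. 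Closure under retracts of the (trivial) cofibrations and fibrations follows from Proposition~\ref{prop-weak idempotent completeness and retracts} (admissible monos and epis are retract-closed) together with the fact that $\class{Q},\class{Q}\cap\class{W},\class{R},\class{R}\cap\class{W}$ are summand-closed (being classes of cotorsion pairs, with $\class{W}$ thick), applied to cokernels and kernels. The lifting of trivial cofibrations against fibrations, and of cofibrations against trivial fibrations, I would prove by the usual obstruction argument: given a commuting square, pull the fibration back along an edge, use Proposition~\ref{prop-properties of weakly idempotent complete categories} parts (3) and (4) to see the comparison maps are admissible, and then the vanishing of $\Ext^1_{\cat{A}}$ between the cokernel of the cofibration and the kernel of the fibration---exactly the $\Ext$-vanishing of one of the cotorsion pairs---produces the lift (the relevant $3 \times 3$ diagrams being available in an exact category, see~\cite{buhler-exact categories}). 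For the factorizations one uses completeness: to factor $f \mathcolon A \to B$ as a trivial cofibration followed by a fibration, choose by enough projectives of $(\class{Q}\cap\class{W},\class{R})$ an admissible epimorphism $W \twoheadrightarrow B$ with $W \in \class{Q}\cap\class{W}$ and kernel in $\class{R}$; then $A \oplus W \twoheadrightarrow B$ given by $f$ on $A$ and $W \twoheadrightarrow B$ on $W$ is an admissible epimorphism whose kernel $K$ is an extension of $A$ by an object of $\class{R}$; choose by enough injectives of $(\class{Q}\cap\class{W},\class{R})$ an admissible monomorphism $K \rightarrowtail R$ with $R \in \class{R}$ and cokernel in $\class{Q}\cap\class{W}$, and set $X = (A \oplus W) \sqcup_K R$. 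Then $A \rightarrowtail X$ is a trivial cofibration (its cokernel is an extension of two objects of $\class{Q}\cap\class{W}$, hence lies there) and $X \twoheadrightarrow B$ is a fibration with kernel $\cong R$ (Proposition~\ref{prop-properties of weakly idempotent complete categories}(4) is what makes the induced map admissible); the dual construction, using completeness of $(\class{Q},\class{R}\cap\class{W})$, factors $f$ as a cofibration followed by a trivial fibration.

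The two-out-of-three axiom for weak equivalences (and retract-closure of the weak equivalences) does not follow formally from the definition and is, I expect, the most delicate point along with the lifting computation; as usual it must be extracted from the lifting properties and factorizations, by first showing that a cofibration is a weak equivalence precisely when it is a trivial cofibration and a fibration is a weak equivalence precisely when it is a trivial fibration---each of these via the retract argument applied to a factorization---after which two-out-of-three becomes routine. Finally, to see the model structure is \emph{exact}: by definition $X$ is cofibrant iff $X = \cok(0 \rightarrowtail X) \in \class{Q}$ and fibrant iff $X \in \class{R}$, so the cofibrant and fibrant objects are exactly $\class{Q}$ and $\class{R}$; and $X$ is trivial iff $0 \rightarrowtail X$ is a weak equivalence, which---factoring $0 \rightarrowtail X$ to see that a trivial object is a quotient of an object of $\class{Q}\cap\class{W}$ by an object of $\class{R}\cap\class{W}$, and conversely using enough projectives of $(\class{Q}\cap\class{W},\class{R})$ to write $R' \rightarrowtail W' \twoheadrightarrow X$ with $W' \in \class{Q}\cap\class{W}$ and deducing $R' \in \class{W}$ (hence $R' \in \class{R}\cap\class{W}$) from thickness when $X \in \class{W}$---is equivalent to $X \in \class{W}$. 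Hence the trivial objects are $\class{W}$, the (trivial) cofibrations and fibrations are exactly the admissible monos and epis with (trivially) cofibrant cokernels and kernels as required by Definition~\ref{def-exact model structures}, and this construction inverts the forward one. In short, the substantive obstacles are the two-out-of-three axiom and the obstruction-theoretic verification of the lifting properties inside the exact category; weak idempotent completeness is precisely what forces the retract axiom and keeps every map built in the factorizations admissible.
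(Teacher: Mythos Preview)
Your proposal is correct, and for the converse direction you give essentially the argument the paper has in mind: the paper simply cites Hovey's Section~5 and notes that weak idempotent completeness enters through the retract axiom (Proposition~\ref{prop-weak idempotent completeness and retracts}) and through the Kernel--Cokernel/Snake Lemma machinery of~\cite{buhler-exact categories}, whereas you spell out the factorization and lifting constructions explicitly. Your factorization via $A \oplus W$ and a pushout is the standard Hovey construction, and your identification of the two-out-of-three axiom for weak equivalences as the delicate step matches what actually happens in~\cite{hovey}.

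Where you genuinely diverge from the paper is in the forward proof that $\class{W}$ is thick. The paper follows Hovey's Lemma~4.3: given $A \rightarrowtail B \twoheadrightarrow C$, it factors $g \mathcolon B \to C$ as a trivial cofibration $i$ followed by a fibration $p$, lets $A' = \ker p$, and then must work to show that the induced comparison $j \mathcolon A \to A'$ is an \emph{admissible} monomorphism---this is where Proposition~2.12 and the ``obscure axiom'' (Proposition~2.16) of~\cite{buhler-exact categories} are invoked. Your route via the sub-lemma is cleaner: pulling back $B \twoheadrightarrow C$ along a trivial fibration $QC \twoheadrightarrow C$ immediately produces a factorization of $A \rightarrowtail B$ as a trivial cofibration followed by a trivial fibration, with no appeal to the obscure axiom at all; the three two-out-of-three cases then drop out uniformly from the sub-lemma and its dual. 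The paper's approach has the mild advantage of handling all three cases in a single diagram, but at the cost of a subtler admissibility verification; your approach trades that for a dual argument but keeps each step elementary.
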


\begin{proof}
Hovey proved the statement that $(\class{Q},\class{R} \cap \class{W})$ and $(\class{Q} \cap \class{W} , \class{R})$ are complete cotorsion pairs in~\cite{hovey}, Proposition~4.1, in the abelian case. A careful look at the proof shows that it readily carries over to exact categories. In the second paragraph of that proof, one can show directly that the map $\class{A}(A,X) \ar \class{A}(A,Y)$ is surjective by taking $g \in \class{A}(A,Y)$, forming the pullback of the pair $(g,p)$, and using the splitting to construct the necessary lift. The dual argument of forming a pushout will work for paragraph three.

Hovey proved (again for the abelian case) that $\class{W}$ is a thick subcategory in Lemma~4.3 in~\cite{hovey}. We follow that proof but make necessary modifications for exact categories. First, if $W \in \class{W}$ and $X$ is a direct summand of $W$, then it is easy to see that the map $0 \rightarrowtail X$ is a retract of the map $0 \rightarrowtail W$. Since $0 \rightarrowtail W$ is a weak equivalence the retract axiom tells us $0 \rightarrowtail X$ is also a weak equivalence. This proves that $\class{W}$ is closed under direct summands. Next we wish to prove that if two out of three of the terms in an exact sequence are in $\class{W}$, then so is the third. We will make repeated use of the following observations which follow easily from the 2 out of 3 axiom for model categories: If $X \ar Y$ is a weak equivalence and either $X$ or $Y$ is in $\class{W}$, then so is the other. On the other hand, if $X,Y \in \class{W}$, then \emph{any} map $X \ar Y$ is a weak equivalence.

Now suppose $A \xrightarrow{f} B \xrightarrow{g} C$ is an admissible short exact sequence. Using the factorization axioms, write $g = pi$ where $p$ is a fibration (so an admissible epimorphism with fibrant kernel) and $i$ is a trivial cofibration (so an admissible monomorphism with a trivially cofibrant cokernel). Let $A' = \ker{p}$ and use the universal property of $\ker{p}$ to get the following commutative diagram:
$$\begin{CD}
A    @>f>> B  @>g>> C \\
@VjVV @VViV  @| \\
A'     @>k>> B' @>p>> C\\
\end{CD}$$
By Proposition~2.12 of~\cite{buhler-exact categories} we get that the left square above is a pushout square. Letting $q = \cok{i}$, one can then prove directly that  $qk$ is the cokernel of $j$. (This is an easy exercise, first use the universal property of the pushout square followed by the universal property of the cokernel $q$.) Since $i$ and $f$ are admissible monomorphisms, it follows that the composition $kj = if$ is an admissible monomorphism. Furthermore $k$ is an admissible monomorphism and $j$ was shown above to have a cokernel. It follows from Quillen's ``obscure axiom'', Proposition~2.16 of~\cite{buhler-exact categories} that $j$ is also an admissible monomorphism. Since the target of $q$ is trivially cofibrant we see that the target of $qk$ is also trivially cofibrant. Therefore $j$ is a trivial cofibration. In particular $j$ is a weak equivalence.

Now if $A \in \class{W}$, then $A' \in \class{W}$ since $j$ is a weak equivalence. In this case $p$ is a trivial fibration. So then $g = pi$ is a weak equivalence. So in this case $B \in \class{W}$ if and only if $C \in \class{W}$.

On the other hand, if $B,C \in \class{W}$, then any map $B \ar C$ is a weak equivalence and in particular $g = pi$ is a weak equivalence. In this case, by the 2 out of 3 axiom, $p$ must be a trivial fibration. Therefore $A' \in \class{W}$. Since we proved above that $j$ is a weak equivalence it follows that $A \in \class{W}$.

Hovey proved the converse statement in Section~5 of~\cite{hovey} in the case of abelian categories. That proof can be adapted here and we point out now how the weakly idempotent completeness hypothesis is necessary. First, in light of Proposition~\ref{prop-weak idempotent completeness and retracts}, we need the weakly idempotent complete hypothesis to prove the retract axiom. Also, throughout Hovey's proof, there are several uses of constructing Kernel-Cokernel exact sequences and uses of the Snake Lemma. These theorems hold in exact categories with the weakly idempotent complete hypothesis by Proposition~8.11 and Corollary~8.13 of~\cite{buhler-exact categories}. In each application one can check that the proper morphisms are ``admissible'' in the sense of Definition~8.1 of~\cite{buhler-exact categories}.
\end{proof}

\begin{corollary}\label{cor-Hovey's theorem for exact categories}
Let $(\class{A},\class{E})$ be a weakly idempotent complete exact category. Then there is a one-to-one correspondence between exact model structures on $\cat{A}$ and complete cotorsion pairs $(\class{Q},\class{R} \cap \class{W})$ and $(\class{Q} \cap \class{W} , \class{R})$ where $\class{W}$ is a thick subcategory of $\cat{A}$. Given a model structure, $\class{Q}$ is the class of cofibrant objects, $\class{R}$ the class of fibrant objects and $\class{W}$ the class of trivial objects. Conversely, given the compatible cotorsion pairs with $\class{W}$ thick, a cofibration (resp. trivial cofibration) is an admissible monomorphism with a cokernel in $\class{Q}$ (resp. $\class{Q} \cap \class{W}$), and a fibration (reps. trivial fibration) is an admissible epimorphism with a kernel in $\class{R}$ (resp. $\class{R} \cap \class{W}$). The weak equivalences are then the maps $g$ which factor as $g = pi$ where $i$ is a trivial cofibration and $p$ is a trivial fibration.
\end{corollary}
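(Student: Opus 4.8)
The plan is simply to package the two halves of Theorem~\ref{them-Hovey's theorem for exact categories} into a bijection and then check that the two resulting assignments are mutually inverse; there is no new homological content to extract, so the work is entirely bookkeeping. Write $\Phi$ for the assignment sending an exact model structure on $\cat{A}$ to the triple $(\class{Q},\class{R},\class{W})$ consisting of its cofibrant, fibrant, and trivial objects (equivalently, to the associated cotorsion pairs $(\class{Q},\class{R}\cap\class{W})$ and $(\class{Q}\cap\class{W},\class{R})$), and write $\Psi$ for the assignment furnished by the converse half of Theorem~\ref{them-Hovey's theorem for exact categories}, which takes such a triple and produces an exact model structure whose cofibrations (resp.\ trivial cofibrations) are the admissible monomorphisms with cokernel in $\class{Q}$ (resp.\ $\class{Q}\cap\class{W}$) and whose fibrations (resp.\ trivial fibrations) are the admissible epimorphisms with kernel in $\class{R}$ (resp.\ $\class{R}\cap\class{W}$). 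The forward half of the theorem is exactly the statement that $\Phi$ is well-defined, i.e.\ that its output really is a pair of compatible complete cotorsion pairs with $\class{W}$ thick; the converse half (where the weak idempotent completeness hypothesis enters) is the statement that $\Psi$ is well-defined.

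First I would verify $\Phi\circ\Psi=\mathrm{id}$. This is immediate from the converse half of Theorem~\ref{them-Hovey's theorem for exact categories}, which asserts outright that in the model structure $\Psi(\class{Q},\class{R},\class{W})$ the cofibrant objects form the class $\class{Q}$, the fibrant objects form $\class{R}$, and the trivial objects form $\class{W}$; hence applying $\Phi$ returns the original triple, and with it the original pair of cotorsion pairs.

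Next I would verify $\Psi\circ\Phi=\mathrm{id}$. Let $M$ be an exact model structure with cofibrant, fibrant, and trivial classes $\class{Q},\class{R},\class{W}$, and put $M'=\Psi(\Phi(M))$. By the recipe defining $\Psi$, the cofibrations of $M'$ are the admissible monomorphisms with cokernel in $\class{Q}$; but by Definition~\ref{def-exact model structures} the cofibrations of $M$ are the admissible monomorphisms whose cokernel is a cofibrant object of $M$, i.e.\ lies in $\class{Q}$, so $M$ and $M'$ have the same cofibrations. The identical argument with $\class{Q}\cap\class{W}$, $\class{R}$, and $\class{R}\cap\class{W}$ shows they have the same trivial cofibrations, the same fibrations, and the same trivial fibrations. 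Finally, by the remark following Definition~\ref{def-exact model structures}, in any model structure the weak equivalences are precisely the maps that factor as a trivial cofibration followed by a trivial fibration, so $M$ and $M'$ also share the same weak equivalences. Since a model structure in the sense of~\cite{hovey-model-categories} is by definition the data of these three classes of maps (subject to axioms), $M'=M$.

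I do not expect a genuine obstacle here: all of the real work is contained in Theorem~\ref{them-Hovey's theorem for exact categories}. The only point that needs care is the observation that Definition~\ref{def-exact model structures} expresses the cofibrations, fibrations, and their trivial analogues as explicit functions of the triple $(\class{Q},\class{R},\class{W})$ (hence of the pair of cotorsion pairs), together with the fact that the weak equivalences are then determined by factorization; this is exactly what makes $\Phi$ injective and pins the round-trip $\Psi\circ\Phi$ down to the identity.
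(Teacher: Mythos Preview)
Your proposal is correct and matches the paper's approach: the corollary is stated without proof, as an immediate repackaging of the two directions of Theorem~\ref{them-Hovey's theorem for exact categories}. Your write-up simply makes explicit the bookkeeping (that $\Phi$ and $\Psi$ are mutually inverse, using Definition~\ref{def-exact model structures} and the remark on weak equivalences following it) that the paper leaves to the reader.
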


\begin{remark}
Functorial factorizations in an exact model structure correspond directly to functorial completeness (as defined after Definition~\ref{def-cotorsion pair in an exact category}) of each of the corresponding cotorsion pairs.
\end{remark}

\section{The homotopy category of an exact model structure}\label{sec-the homotopy category of an exact model structure}

Let $\class{M}$ be a model category and $X,Y \in \class{M}$. The axioms of a model category allow for the construction of homotopy relations on the set $\Hom_{\class{M}}(X,Y)$. The first is \emph{left homotopy} defined in terms of ``cylinder objects''. The second is \emph{right homotopy} defined in terms of ``path objects''. It is interesting to note that the standard approach to defining left and right homotopy (for example, see Sections~4 and~5 of~\cite{dwyer-spalinski}, or Section~1.2 of~\cite{hovey-model-categories}) and for proving all of the basic facts and theorems on homotopy including the ``Fundamental Theorem of Model Categories'' do not require the full assumption that $\class{M}$ has all finite limits and colimits. All that is needed are terminal and initial objects, the existence of finite ``self-products'' $X \prod X$ and ``self-coproducts'' $X \coprod X$, and the \emph{existence} of any pullback of any fibration and any pushout of any cofibration. The fact that pullbacks of fibrations are again fibrations and that pushouts of cofibrations are again cofibrations follows as usual. Any exact model structure clearly has these properties: First, since an exact category is additive it contains the initial/terminal object 0 as well as all finite biproducts. Second, the definition of an exact model structure requires all cofibrations (resp. fibrations) to be admissible monomorphisms (resp. admissible epimorphisms) and the axioms for an exact category require pushouts (resp. pullbacks) of all admissible monomorphisms (resp. admissible epimorphisms) to exist.

Our intention now is to remind the reader of some results and notation from homotopy theory which we will use ahead. We will state these as facts about exact model structures when really they hold for all model structures on categories having the above mentioned limits and colimits. Our notation follows sections~4 and ~5 of~\cite{dwyer-spalinski} here.

\begin{fact}\label{fact-basic facts on homotopy and homotopy category}
Let $(\class{A},\class{E})$ be an exact category with an exact model structure and let $X,Y \in \class{A}$. The following standard facts of model categories all hold, with the usual proofs, without any further assumptions on the existence of limits and colimits:

\begin{enumerate}
\item There is a relation called \emph{left homotopy}, denoted $\sim^l$, on $\Hom_{\class{A}}(X,Y)$ which is an equivalence relation whenever $X$ is cofibrant.
\item There is a relation called \emph{right homotopy}, denoted $\sim^r$, on $\Hom_{\class{A}}(X,Y)$ which is an equivalence relation whenever $Y$ is fibrant.
\item\label{item-equality of left and right homotopy} Whenever $X$ and $Y$ are each cofibrant and fibrant then $\sim^l \ = \ \sim^r$. In this case we simply call the relation \emph{homotopy} and denote it by $\sim$. We then also define $\pi(X,Y) := \Hom_{\class{A}}(X,Y)/\sim$.
\item Denote the full subcategory of cofibrant and fibrant subobjects by $\class{A}_{c,f}$. Then the homotopy relation $\sim$ is compatible with composition so that we get a homotopy category $\pi\class{A}_{c,f}$ where the $\Hom$ sets are as
    in item ~\ref{item-equality of left and right homotopy} above.
\item Any $f \mathcolon X \ar Y$ in $\class{A}_{c,f}$ which is an isomorphism in $\pi\class{A}_{c,f}$ is called a \emph{homotopy equivalence}. So $f$ is a homotopy equivalence if there exists a map $g \mathcolon Y \ar X$ such that $gf \sim 1_X$ and $fg \sim 1_Y$. A map $f \mathcolon X \ar Y$ in $\class{A}_{c,f}$ is a homotopy equivalence if and only if it is a weak equivalence in $\class{A}$.
\item Given any $A \in \cat{A}$, it has a cofibrant replacement $p_X \mathcolon QX \twoheadrightarrow X$ and a fibrant replacement $i_X \mathcolon X \rightarrowtail RX$. $QX$ is cofibrant and $p_X$ is a trivial fibration, and $RX$ is fibrant and $i_X$ is a trivial cofibration. We can insist these exist functorially if one wishes.
\item We can define the homotopy category $\text{Ho}(\cat{A})$ to be the category with the same objects as $\cat{A}$ and with $\Hom_{\text{Ho}(\cat{A})}(A,B) = \pi(RQA,RQB)$. There is a canonical functor $\gamma_{\cat{A}} \mathcolon \cat{A} \ar \text{Ho}(\cat{A})$ which sends a map $f \mathcolon A \ar B$ to the homotopy class $[f'] \in \pi(RQA,RQB)$, where below $\tilde{f}$ and $f'$ are \emph{any} maps making the diagrams commute:
    $$\begin{CD}
RQA    @>f'>> RQB \\
@A i_{QA} AA @AA i_{QB} A  \\
QA     @> \tilde{f} >> QB \\
@Vp_A VV @VVp_B V  \\
A     @>f>> B.
\end{CD}$$ On the other hand, given \emph{any} map $f \mathcolon A \ar B$ in the homotopy category $\text{Ho}(\cat{A})$, there is a map $f' \mathcolon RQA \ar RQB$ in $\cat{A}$, unique up to homotopy, such that $$f = \gamma_{\cat{A}}(p_B) \circ (\gamma_{\cat{A}}(i_{QB}))^{-1} \circ \gamma_{\cat{A}}(f') \circ \gamma_{\cat{A}}(i_{QA}) \circ (\gamma_{\cat{A}}(p_A))^{-1}.$$

\item The functor $\gamma_{\cat{A}}$ sends weak equivalences in $\cat{A}$ to isomorphisms in $\text{Ho}(\cat{A})$ and in fact $\text{Ho}(\cat{A})$ as a localization of $\cat{A}$ with respect to the weak equivalences.
\end{enumerate}
\end{fact}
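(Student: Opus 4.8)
The plan is to note that Fact~\ref{fact-basic facts on homotopy and homotopy category} claims nothing beyond the classical development of homotopy theory in a model category, so the proof consists of running the standard arguments of Sections~4--5 of~\cite{dwyer-spalinski} (equivalently Section~1.2 of~\cite{hovey-model-categories}) and certifying, line by line, that no construction in them requires a limit or colimit outside the short list guaranteed by an exact model structure: the zero object, finite biproducts $X \oplus X$ (which serve as both the self-product $X \prod X$ and the self-coproduct $X \coprod X$), and the existence of pushouts of admissible monomorphisms and pullbacks of admissible epimorphisms, hence of pushouts of cofibrations and pullbacks of fibrations. I would treat the items in the stated order.

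For items~(1)--(3): a cylinder object for $X$ is produced by applying the factorization axiom to the codiagonal $X \oplus X \ar X$, writing it as a cofibration $X \oplus X \rightarrowtail C$ followed by a weak equivalence $C \ar X$, and one records that the factorization may instead be chosen to be a cofibration followed by a trivial fibration, giving a cylinder with $C \ar X$ a trivial fibration; dually one builds path objects from the diagonal $X \ar X \oplus X$. Left homotopy $f \sim^l g$ then means a map $C \ar Y$ restricting to $f$ and $g$ along the two legs $X \ar C$, and right homotopy is the dual notion using a path object for $Y$. Reflexivity and symmetry are formal; transitivity of $\sim^l$ when $X$ is cofibrant is the usual argument that glues two such cylinder objects $C, C'$ along a copy of $X$ --- the object $C \coprod_X C'$ needed here is a pushout of one of the legs $X \ar C$, which (see the last paragraph) is a cofibration, so this pushout exists by hypothesis --- and then refactors the resulting codiagonal into a cylinder object witnessing $f \sim^l h$. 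The implications $\sim^l \Rightarrow \sim^r$ when $X$ is cofibrant and $\sim^r \Rightarrow \sim^l$ when $Y$ is fibrant are the standard lifting arguments against (trivial) cofibrations and (trivial) fibrations; combining them gives item~(3), and the definitions of $\sim$ and $\pi(X,Y)$ follow.

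For items~(4)--(8): that $\sim$ is compatible with composition on $\class{A}_{c,f}$ is proved as usual, whiskering a left homotopy by a map out of $Y$ and precomposing by a map into $X$ via a lift against the cofibration $W \oplus W \rightarrowtail C_W$ and the trivial fibration $C_X \ar X$; this uses only biproducts, factorizations and lifting, so $\pi\class{A}_{c,f}$ is a genuine category. Whitehead's theorem (item~(5)) --- a map in $\class{A}_{c,f}$ is a homotopy equivalence iff it is a weak equivalence --- is the standard two-part argument: the forward direction factors the weak equivalence as a trivial cofibration followed by a trivial fibration and produces a homotopy inverse by lifting, while the converse factors a homotopy equivalence and shows the fibration part admits a section making it a trivial fibration, again purely by lifting and homotopies. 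Cofibrant and fibrant replacements (item~(6)) come directly from factoring $0 \rightarrowtail X$ and $X \twoheadrightarrow 0$ --- both admissible by the observation preceding Definition~\ref{def-exact model structures} --- with functoriality coming from functorial factorizations when one assumes them. Items~(7) and~(8), the construction of $\text{Ho}(\class{A})$ and the verification that $\gamma_{\class{A}}$ exhibits it as the localization of $\class{A}$ at the weak equivalences, are then a diagram chase built entirely on items~(1)--(6): they manipulate maps and their homotopy classes, never forming new objects beyond the $QX$ and $RX$ already at hand, so they introduce no new (co)limit requirements.

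The one point demanding genuine care --- and the main obstacle --- is the bookkeeping in the transitivity and composition arguments above: one must verify that every pushout invoked is a pushout of a cofibration and every pullback is a pullback of a fibration, since those are the only ones an exact model structure is promised to admit. The enabling lemma, often left implicit in treatments that assume all finite colimits, is that for cofibrant $X$ the coproduct inclusion $X \ar X \oplus X$ is a cofibration --- it is the pushout of the cofibration $0 \rightarrowtail X$ along $0 \rightarrowtail X$, and pushouts of cofibrations are cofibrations --- so each leg $X \ar C$ of a cylinder object, being the composite $X \ar X \oplus X \rightarrowtail C$, is a cofibration, and the cylinder-gluing pushouts are legitimate; dually for the legs of path objects of fibrant objects. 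Once this audit is complete, every proof in~\cite{dwyer-spalinski} and~\cite{hovey-model-categories} transcribes verbatim.
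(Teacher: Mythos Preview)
Your proposal is correct and takes essentially the same approach as the paper: the paper does not give a proof of this Fact at all but, in the paragraph preceding it, simply points to Sections~4--5 of~\cite{dwyer-spalinski} and Section~1.2 of~\cite{hovey-model-categories} and asserts that the only (co)limits those arguments use are the zero object, finite self-(co)products, and pushouts of cofibrations / pullbacks of fibrations, all of which an exact model structure provides. Your write-up is in fact more careful than the paper's, since you explicitly isolate the one nontrivial audit point --- that the legs $X \ar C$ of a cylinder for a cofibrant $X$ are cofibrations, so that the gluing pushout $C \coprod_X C'$ is genuinely a pushout along a cofibration --- which the paper leaves implicit.
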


Let $(\cat{A}_1,\class{E}_1)$ and $(\cat{A}_2,\class{E}_2)$ be exact model structures. We call a functor $F \mathcolon \cat{A}_1 \ar \cat{A}_2$ which preserves weak equivalences \emph{homotopical}.

\begin{lemma}\label{lemma-definition of HoF for a homotopical functor F}
Any homotopical functor of exact model structures $F \mathcolon \cat{A}_1 \ar \cat{A}_2$ induces a unique functor $\text{Ho}F \mathcolon \text{Ho}\cat{A}_1 \ar \text{Ho}\cat{A}_2$ making a commutative square
$$\begin{CD}
\cat{A}_1    @>F>> \cat{A}_2 \\
@V\gamma_{\cat{A}_1} VV  @VV \gamma_{\cat{A}_2} V \\
\text{Ho}\cat{A}_1 @> \text{Ho}F >> \text{Ho}\cat{A}_2 .\\
\end{CD}$$ On objects $\text{Ho}F$ is the identity and for an arrow $f \in \text{Ho}\cat{A}_1(A,B)$ represented by $[f'] \in \pi(RQA,RQB) := \Hom_{\class{A}_1}(RQA,RQB)/\sim$ we have $$\text{Ho}F(f) = \gamma_{\cat{A}_2}F(p_B) \circ (\gamma_{\cat{A}_2}F(i_{QB}))^{-1} \circ \gamma_{\cat{A}_2}F(f') \circ \gamma_{\cat{A}_2}F(i_{QA}) \circ (\gamma_{\cat{A}_2}F(p_A))^{-1}.$$
\end{lemma}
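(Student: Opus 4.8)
The plan is to reduce the statement to the universal property of localization, exactly as one does for the classical Dwyer--Spalinski construction, rather than to verify the explicit formula directly. First I would observe that by Fact~\ref{fact-basic facts on homotopy and homotopy category}(8), $\gamma_{\cat{A}_2} \colon \cat{A}_2 \ar \text{Ho}\cat{A}_2$ exhibits $\text{Ho}\cat{A}_2$ as the localization of $\cat{A}_2$ at its class of weak equivalences. Now consider the composite $\gamma_{\cat{A}_2} \circ F \colon \cat{A}_1 \ar \text{Ho}\cat{A}_2$. Since $F$ is homotopical, it sends weak equivalences of $\cat{A}_1$ to weak equivalences of $\cat{A}_2$, and $\gamma_{\cat{A}_2}$ sends those to isomorphisms; hence $\gamma_{\cat{A}_2} \circ F$ inverts all weak equivalences of $\cat{A}_1$. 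By the universal property of the localization $\gamma_{\cat{A}_1}$, there is a unique functor $\text{Ho}F \colon \text{Ho}\cat{A}_1 \ar \text{Ho}\cat{A}_2$ with $\text{Ho}F \circ \gamma_{\cat{A}_1} = \gamma_{\cat{A}_2} \circ F$. This gives both existence and uniqueness of the commutative square in one stroke.

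It then remains to identify $\text{Ho}F$ on objects and morphisms with the stated formula, i.e.\ to check that the formula \emph{does} define a functor making the square commute --- uniqueness from the previous paragraph then forces it to be $\text{Ho}F$. On objects: $\gamma_{\cat{A}_1}$ is the identity on objects, so $\text{Ho}F$ must be too, and indeed the displayed formula assigns to $A$ (viewed as $RQA$ after applying $F$, really $F(A)$ up to the replacement bookkeeping) a well-defined object of $\text{Ho}\cat{A}_2$; one checks the object-level assignment is just $A \mapsto A$ because $\text{Ho}\cat{A}_2$ has the same objects as $\cat{A}_2$ and the replacements are connected to $F(A)$ by weak equivalences, hence isomorphisms in $\text{Ho}\cat{A}_2$. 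On morphisms: given $f \in \text{Ho}\cat{A}_1(A,B)$ represented by $[f'] \in \pi(RQA,RQB)$, Fact~\ref{fact-basic facts on homotopy and homotopy category}(7) tells us that $f = \gamma_{\cat{A}_1}(p_B)\circ(\gamma_{\cat{A}_1}(i_{QB}))^{-1}\circ\gamma_{\cat{A}_1}(f')\circ\gamma_{\cat{A}_1}(i_{QA})\circ(\gamma_{\cat{A}_1}(p_A))^{-1}$. Applying $\text{Ho}F$, using functoriality and the commutation $\text{Ho}F \circ \gamma_{\cat{A}_1} = \gamma_{\cat{A}_2}\circ F$, each factor $\gamma_{\cat{A}_1}(-)$ turns into $\gamma_{\cat{A}_2}(F(-))$, and inverses go to inverses since $\text{Ho}F$ is a functor; this yields precisely the displayed formula for $\text{Ho}F(f)$.

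The one genuine point requiring care --- and the step I expect to be the main obstacle --- is well-definedness of the morphism formula: one must check that it is independent of the choice of representative $f'$ within its homotopy class and independent of the chosen cofibrant and fibrant replacements $QA, RA$, etc. For the homotopy-class independence, the key input is that a homotopical functor $F$ preserves the relation $\sim$ on $\Hom$-sets between cofibrant-fibrant objects: if $f' \sim^l g'$ via a cylinder object $\text{Cyl}(RQA)$, then applying $F$ and factoring $F(RQA \coprod RQA \ar F(\text{Cyl}(RQA)) \ar F(RQA))$ through a cylinder object for $F(RQA)$ shows $F(f') \sim F(g')$, hence $\gamma_{\cat{A}_2}F(f') = \gamma_{\cat{A}_2}F(g')$; alternatively, and more cleanly, one avoids this entirely by noting that homotopic maps become equal in $\text{Ho}\cat{A}_2$ because they agree after composing with the weak equivalences to the replacement objects, which $\gamma_{\cat{A}_2}$ inverts. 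Independence of replacements is the standard argument: any two cofibrant (resp.\ fibrant) replacements are connected by a weak equivalence over (resp.\ under) the object, $F$ sends these to weak equivalences, and $\gamma_{\cat{A}_2}$ to isomorphisms, so the two formulas differ by conjugation by isomorphisms and give the same arrow in $\text{Ho}\cat{A}_2$. In fact, once one invokes the universal property as in the first paragraph, all of this well-definedness is automatic --- the formula is merely an explicit description of the unique functor $\text{Ho}F$ --- so the cleanest write-up establishes existence/uniqueness abstractly first and then \emph{computes} $\text{Ho}F$ on objects and morphisms using Fact~\ref{fact-basic facts on homotopy and homotopy category}(7) and functoriality, with no separate verification needed.
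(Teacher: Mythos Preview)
Your approach is correct and essentially identical to the paper's: both argue that $\gamma_{\cat{A}_2}\circ F$ inverts weak equivalences and then invoke the universal property of the localization $\gamma_{\cat{A}_1}$ (the paper cites Dwyer--Spalinski's Theorem~6.2 directly) to obtain existence and uniqueness of $\text{Ho}F$, with the explicit formula falling out from Fact~\ref{fact-basic facts on homotopy and homotopy category}(7). Your extended discussion of well-definedness is more careful than the paper's one-line appeal to the proof of that theorem, but as you yourself note at the end, it is ultimately unnecessary once the universal property has been invoked.
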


\begin{proof}
By Proposition~5.8 of~\cite{dwyer-spalinski} the functor $\gamma_{\cat{A}_2}$ sends weak equivalences to isomorphisms and so the composite $\gamma_{\cat{A}_2} \circ F$ sends weak equivalences to isomorphisms. By Theorem~6.2 of~\cite{dwyer-spalinski}, $\text{Ho}\cat{A}_1$ is a localization of $\cat{A}_1$ with respect to its weak equivalences which means there exists a unique functor $\text{Ho}F \mathcolon \text{Ho}\cat{A}_1 \ar \text{Ho}\cat{A}_2$ such that $\text{Ho}F \circ \gamma_{\cat{A}_1}  = \gamma_{\cat{A}_2} \circ F$. It follows from the proof of Theorem~6.2 of~\cite{dwyer-spalinski} that $\text{Ho}F$ must be defined exactly as stated.
\end{proof}

\subsection{Homotopic maps in exact model structures}

We now give a nice characterization of left and right homotopic maps in exact model structures. The statement and proof of the following Proposition is inspired by Proposition~9.1 of~\cite{hovey}.

\begin{proposition}\label{prop-left and right homotopic maps in exact model structures}
Assume $(\class{A},\class{E})$ is an exact category with an exact model structure. Let $(\class{Q},\class{R} \cap \class{W})$ and $(\class{Q} \cap \class{W} , \class{R})$ be the corresponding complete cotorsion pairs of Theorem~\ref{them-Hovey's theorem for exact categories}.
\begin{enumerate}
\item Two maps $f,g \mathcolon X \ar Y$ in $\class{A}$ are right homotopic if and only if $g-f$ factors through an object of $\class{Q} \cap \class{W}$.
\item Two maps $f,g \mathcolon X \ar Y$ in $\class{A}$ are left homotopic if and only if $g-f$ factors through an object of $\class{R} \cap \class{W}$.
\item Suppose $Y$ is fibrant. Then two maps $f,g \mathcolon X \ar Y$ in $\class{A}$ are right homotopic if and only if $g-f$ factors through an object of $\class{Q} \cap \class{R} \cap \class{W}$.
\item Suppose $X$ is cofibrant. Then two maps $f,g \mathcolon X \ar Y$ in $\class{A}$ are left homotopic if and only if $g-f$ factors through an object of $\class{Q} \cap \class{R} \cap \class{W}$.
 \item Suppose $X$ is cofibrant and $Y$ is fibrant. Then two maps $f,g \mathcolon X \ar Y$ in $\class{A}$ are homotopic if and only if $g-f$ factors through an object of $\class{Q} \cap \class{R} \cap \class{W}$ if and only if $g-f$ factors through an object of $\class{Q} \cap \class{W}$ if and only if $g-f$ factors through an object of $\class{R} \cap \class{W}$.
\end{enumerate}
\end{proposition}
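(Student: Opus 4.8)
The plan is to prove (1) first; then (2) is exactly (1) applied to the opposite exact category $\class{A}^{\mathrm{op}}$ (under which the two cotorsion pairs are interchanged, ``cofibrant'' and ``fibrant'' are interchanged, and a right homotopy becomes a left homotopy), so (2) follows formally. Parts (3) and (4) are (1) and (2) with an extra hypothesis that permits one further intersection, and (5) then combines (3) and (4) with Fact~\ref{fact-basic facts on homotopy and homotopy category}(\ref{item-equality of left and right homotopy}).

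For the ``if'' half of (1) I would exhibit an explicit path object. Given $g-f=\beta\alpha$ with $\alpha\mathcolon X\ar W$, $\beta\mathcolon W\ar Y$ and $W\in\class{Q}\cap\class{W}$, take $PY=Y\oplus W$ with structure map $w=\binom{1_Y}{0}\mathcolon Y\ar Y\oplus W$ and ``endpoint'' map $q=\bigl(\begin{smallmatrix}1_Y & 0\\ 1_Y & \beta\end{smallmatrix}\bigr)\mathcolon Y\oplus W\ar Y\oplus Y$, so that the two structure maps are $d_0=(1_Y,0)$ and $d_1=(1_Y,\beta)$. Then $qw$ is the diagonal; $w$ is a split monomorphism, hence (by Proposition~\ref{prop-properties of weakly idempotent complete categories}, using weak idempotent completeness) an admissible monomorphism, and its cokernel is $W\in\class{Q}\cap\class{W}$, so $w$ is a trivial cofibration, in particular a weak equivalence. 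Thus $(Y\oplus W,w,q)$ is a path object for $Y$, and $H=\binom{f}{\alpha}\mathcolon X\ar Y\oplus W$ has $d_0H=f$ and $d_1H=f+\beta\alpha=g$, witnessing $f\sim^r g$.

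For the ``only if'' half of (1): subtracting the constant right homotopy at $f$ (i.e.\ replacing $H$ by $H-wf$, where $w$ is the structure map of the given path object) turns a right homotopy $f\sim^r g$ into one from $0$ to $g-f$, so I may assume $f=0$ and put $h=g-f$. Using the factorization axiom I would improve the witnessing path object to a good one $Y\xrightarrow{w}PY\xrightarrow{(d_0,d_1)}Y\oplus Y$ with $(d_0,d_1)$ a fibration. Weak idempotent completeness enters twice: $d_0$ is the composite of an admissible epimorphism with a retraction, so $K:=\ker d_0$ exists; and $w$ is a section of $d_0$, hence a split, therefore admissible, monomorphism, so $PY\cong K\oplus Y$ with $K\cong\cok w$. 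Since $w$ is also a weak equivalence, factoring it as a trivial cofibration followed by a trivial fibration and using that $\class{W}$ is thick (Theorem~\ref{them-Hovey's theorem for exact categories}) gives $\cok w\in\class{W}$, so $K\in\class{W}$. As $d_0H=0$, the homotopy $H$ factors through $K$, and hence $h=d_1H$ factors through $K\in\class{W}$. It remains to replace $K$ by an object of $\class{Q}\cap\class{W}$: here I would apply completeness of the cotorsion pairs to $K$ — taking a special $\class{Q}\cap\class{W}$-object mapping onto (or into) $K$, which stays inside $\class{W}$ by thickness — and transport the factorization of $h$ across it.

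Parts (3) and (4) follow by the same argument with the extra hypothesis $Y$ fibrant (resp.\ $X$ cofibrant), which in the last step lets one also force the intermediate object into $\class{R}$, giving $\class{Q}\cap\class{R}\cap\class{W}$; and (5) follows because for $X$ cofibrant and $Y$ fibrant Fact~\ref{fact-basic facts on homotopy and homotopy category}(\ref{item-equality of left and right homotopy}) identifies $\sim^l$, $\sim^r$ and $\sim$, so the three equivalent factorization conditions drop out of (3) and (4). The step I expect to be the real obstacle is precisely this last ``relocation'' in the ``only if'' direction of (1)--(2): upgrading ``$g-f$ factors through an object of $\class{W}$'' to ``$g-f$ factors through an object of $\class{Q}\cap\class{W}$'' without assuming $X$ cofibrant or $Y$ fibrant — the obvious attempts (lifting the homotopy through a special precover, or extending along a special preenvelope) each require $X\in\class{Q}$ or $Y\in\class{R}$, so one has to work harder, e.g.\ by building a very good path object directly from the cotorsion pair and exploiting the freedom in the choice of homotopy, or by passing through cofibrant/fibrant replacements and descending. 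One should also keep checking that the various split monomorphisms and epimorphisms produced along the way are admissible, which is exactly the role played by weak idempotent completeness.
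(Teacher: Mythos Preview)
Your plan is sound and your identification of the obstacle is exactly right; the paper resolves it precisely along the line you suggest---``building a very good path object directly from the cotorsion pair''---but the execution is simpler than you might fear and avoids your intermediate step through an arbitrary $K\in\class{W}$ altogether.

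Here is the point. Rather than starting from an arbitrary good path object and then trying to relocate the resulting $K=\cok w\in\class{W}$ into $\class{Q}\cap\class{W}$, the paper \emph{chooses} the path object so that $K$ already lies in $\class{Q}\cap\class{W}$. Use completeness of $(\class{Q}\cap\class{W},\class{R})$ to pick an admissible epimorphism $q\mathcolon Q\twoheadrightarrow Y$ with $Q\in\class{Q}\cap\class{W}$ and $\ker q\in\class{R}$. Then set $PY=Y\oplus Q$, $i=\binom{1_Y}{0}$, $p=\left(\begin{smallmatrix}1_Y&0\\ 1_Y&q\end{smallmatrix}\right)$. Now $i$ is a trivial cofibration (cokernel $Q\in\class{Q}\cap\class{W}$), and---this is the extra work you did not do in your ``if'' half---$p$ is a \emph{fibration}: one checks $\ker p=\ker q\in\class{R}$, and $p$ is an admissible epimorphism because $\left(\begin{smallmatrix}1&0\\1&q\end{smallmatrix}\right)\left(\begin{smallmatrix}q&0\\-1&1\end{smallmatrix}\right)=\left(\begin{smallmatrix}q&0\\0&q\end{smallmatrix}\right)$ is one. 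So $Y\oplus Q$ is a \emph{very good} path object. A right homotopy through it is exactly a pair $(\alpha,\beta)$ with $\alpha=f$ and $g=f+q\beta$, i.e.\ $g-f=q\beta$ factors through $Q\in\class{Q}\cap\class{W}$. Conversely, if $g-f$ factors through some $Q'\in\class{Q}\cap\class{W}$, the map $Q'\to Y$ lifts over the fibration $q$ (since $0\to Q'$ is a trivial cofibration), so the factorization may be rerouted through $Q$ and hence gives a right homotopy via $Y\oplus Q$. Thus the ``relocation'' problem never arises: the intermediate object is in $\class{Q}\cap\class{W}$ by construction. Your own analysis in fact shows why this works: any good path object is isomorphic to $Y\oplus K$ with $K=\cok w$ and second projection a fibration $K\twoheadrightarrow Y$; the paper simply engineers $K=Q$.

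Two smaller remarks. First, in your ``if'' half you invoke weak idempotent completeness to make $w=\binom{1_Y}{0}$ admissible, but this is unnecessary: the split sequence $Y\rightarrowtail Y\oplus W\twoheadrightarrow W$ is admissible by the exact-category axioms themselves. Second, for (3) and (4) the paper does not rerun the path-object argument with the extra hypothesis; it simply uses (1) or (2) and then upgrades the factorization. For (4): if $g-f$ factors through $R\in\class{R}\cap\class{W}$ and $X$ is cofibrant, take a cofibrant replacement $Q(R)\twoheadrightarrow R$ (a trivial fibration) and lift the map $X\to R$ through it; then $g-f$ factors through $Q(R)\in\class{Q}\cap\class{R}\cap\class{W}$. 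This is shorter than redoing the construction.
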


\begin{proof}
The first two statements are dual and we will prove (1). We want to characterize right homotopy, so we first need to construct a good path object of $Y$, which is a factorization of the diagonal map $\Delta \mathcolon Y \ar Y \oplus Y$ as a weak equivalence followed by a fibration. Since $(\class{Q} \cap \class{W} , \class{R})$ is a complete cotorsion pair in $\cat{A}$ we can find an admissible epimorphism $q \mathcolon Q \twoheadrightarrow Y$ where $Q \in \class{Q} \cap \class{W}$ and with $\ker{q} = R \in \class{R}$.
Now consider the factorization $Y \xrightarrow{i} Y \oplus Q \xrightarrow{p} Y \oplus Y$ where in the standard matrix notation $i = \binom{1_Y}{0}$ is the canonical inclusion and $p = \left( \begin{array}{c}
 1_Y \ \ 0 \\
 1_Y \ \ q \\
 \end{array} \right)$. Clearly $\Delta = pi$ and $i$ is a trivial cofibration since it is an admissible monomorphism with cokernel $Q \in \class{Q} \cap \class{W}$. We claim that $p$ is a fibration, that is, it is an admissible epimorphism with kernel in $\class{R}$. Indeed, one can check that $\ker{p} = \ker{q} = R \in \class{R}$. So we now show that $p$ is an admissible epimorphism. Since $q$ is an admissible epimorphism it follows from Proposition~2.9 of~\cite{buhler-exact categories} that $q \oplus q = \left( \begin{array}{c}
 q \ \ 0 \\
 0 \ \ q \\
 \end{array} \right)$ is an admissible epimorphism. But also,
 $$\left( \begin{array}{c}
 1_Y \ \ 0 \\
 1_Y \ \ q \\
 \end{array} \right) \left( \begin{array}{c}
 q \ \ \ \ 0 \\
 -1_Q \ \ 1_Q \\
 \end{array} \right) = \left( \begin{array}{c}
 q \ \ 0 \\
 0 \ \ q \\
 \end{array} \right)$$ where $\left( \begin{array}{c}
 q \ \ \ \ 0 \\
 -1_Q \ \ 1_Q \\
 \end{array} \right)$ is the map $Q \oplus Q \ar Y \oplus Q$. It follows from Proposition~\ref{prop-properties of weakly idempotent complete categories} that $p$ is an admissible epimorphism. So now we have proved that $Y \oplus Q$ is a good path object for $Y$.

Now let $f,g \mathcolon X \ar Y$ be two maps in $\class{A}$. Then by definition, $f \sim^r g$ if and only if there is a map $X \xrightarrow{(\alpha  \ \beta)} Y \oplus Q$ such that $(f  \ g) = p \, (\alpha \ \beta)$. That is, $f \sim^r g$ if and only if there are maps $\alpha \mathcolon X \ar Y$ and $\beta \mathcolon X \ar Q$ such that $f = \alpha$ and $g = \alpha + q \beta$. So if and only if $g = f + q \beta$ for some $\beta \mathcolon X \ar Q$. So if and only if $g-f = q\beta$ for some $\beta \mathcolon X \ar Q$. In particular, this proves that $f \sim^r g$ if and only if $g - f$ factors through the $Q$ via $q$. On the other hand, say $g - f$ factors through some object $Q' \in \class{Q} \cap \class{W}$. Then since $q  \mathcolon Q \twoheadrightarrow Y$ is a fibration, the map $Q' \ar Y$ lifts over $q$. Thus the factorization of $g -f$ through $Q'$ extends to a factorization of $g-f$ through $q$. Therefore, $f \sim^r g$. This completes the proof of (1).

Statements (3) and (4) are dual and we will prove (4). Suppose that $X$ is cofibrant and $f , g \mathcolon X \ar Y$ are left homotopic. Then by (2) we know $g-f$ factors through some object $R \in \class{R} \cap \class{W}$. Let $Q(R) \twoheadrightarrow R$ be a cofibrant replacement of $R$. Then since $X$ is cofibrant, the map $X \ar R$ in the factorization of $g-f$, lifts over $Q(R) \twoheadrightarrow R$. Thus the factorization of $g-f$ extends to a factorization of $g-f$ through $Q(R)$. Since $Q(R) \in \class{Q} \cap \class{R} \cap \class{W}$ we are done.

Now (5) is true since left and right homotopy coincide when $X$ is cofibrant and $Y$ is fibrant.
\end{proof}

\begin{remark}
By Proposition~\ref{prop-left and right homotopic maps in exact model structures} it is immediate that when $X$ is cofibrant then left homotopic implies right homotopic and the dual is true when $Y$ is fibrant. This of course is a special case of the general fact that holds in all model categories.
\end{remark}

\begin{example}
Consider the flat model structure on chain complexes of $R$-modules constructed in~\cite{gillespie}, or chain complexes sheaves~\cite{gillespie-sheaves}, or chain complexes of quasi-coherent sheaves on a quasi-compact and semi-separated scheme~\cite{gillespie-quasi-coherent}. With this model structure Proposition~\ref{prop-left and right homotopic maps in exact model structures} says two chain maps $f,g \mathcolon X \ar Y$ are right homotopic if and only if their difference factors through a flat complex and are left homotopic if and only if their difference factors through a cotorsion complex. If $X$ is a dg-flat complex and $Y$ is a dg-cotorsion complex, then $f \sim g$ if and only if their difference factors through a flat cotorsion complex.
\end{example}

\subsection{Projective, injective and Frobenius model structures}

Assume $(\class{A},\class{E})$ is an exact category with an exact model structure. We now look at what we call projective, injective, and Frobenius model structures.

\begin{definition}
Assume $(\class{A},\class{E})$ is an exact category with an exact model structure.
\begin{enumerate}
\item We call the model structure on $\cat{A}$ \emph{projective} if the trivially cofibrant objects coincide with the projectives.
\item We call the model structure on $\cat{A}$ \emph{injective} if the trivially fibrant objects coincide with the injectives.
\item We call the model structure on $\cat{A}$ \emph{Frobenius} if the trivially cofibrant objects coincide with the projectives and the trivially fibrant objects coincide with the injectives.
\end{enumerate}
\end{definition}

\begin{lemma}\label{lemma-projective and injective model structures}
Assume $(\class{A},\class{E})$ is an exact category with an exact model structure.
\begin{enumerate}
\item It is a projective model structure if and only if every object is fibrant.
\item It is an injective model structure if and only if every object is cofibrant.
\end{enumerate}
\end{lemma}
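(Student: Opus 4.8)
The plan is to reduce the whole statement to a formal property of one of the two cotorsion pairs attached to the model structure. Since the two items are dual (pass to the opposite exact category, which interchanges cofibrant with fibrant and projective with injective while preserving all the exact-model-structure conditions), I would prove only (1). By Theorem~\ref{them-Hovey's theorem for exact categories}, with $\class{Q}$, $\class{R}$, $\class{W}$ the classes of cofibrant, fibrant, and trivial objects, the trivially cofibrant objects are exactly $\class{Q}\cap\class{W}$, and $(\class{Q}\cap\class{W},\class{R})$ is a complete cotorsion pair on $\class{A}$. Writing $\class{P}$ for the class of projectives, ``projective model structure'' means precisely $\class{Q}\cap\class{W}=\class{P}$, and ``every object is fibrant'' means $\class{R}=\class{A}$. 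So the lemma becomes a purely algebraic statement about the cotorsion pair $(\class{Q}\cap\class{W},\class{R})$. The one external fact I would invoke is the Ext-characterization recalled before Definition~\ref{def-cotorsion pair in an exact category}: an object $P\in\class{A}$ is projective if and only if $\Ext^1_{\class{A}}(P,N)=0$ for every $N\in\class{A}$.

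For the ``if'' direction I would assume $\class{R}=\class{A}$, so that $(\class{Q}\cap\class{W},\class{A})$ is a cotorsion pair. Condition (1) of Definition~\ref{def-cotorsion pair in an exact category} then gives $\Ext^1_{\class{A}}(F,C)=0$ for every $F\in\class{Q}\cap\class{W}$ and every $C\in\class{A}$, so each such $F$ is projective; conversely, condition (3) applied to an arbitrary $P\in\class{P}$ (for which $\Ext^1_{\class{A}}(P,C)=0$ for all $C$) yields $P\in\class{Q}\cap\class{W}$. Hence $\class{Q}\cap\class{W}=\class{P}$, i.e. the model structure is projective.

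For the ``only if'' direction I would assume $\class{Q}\cap\class{W}=\class{P}$. Given any $X\in\class{A}$ and any $F\in\class{Q}\cap\class{W}=\class{P}$, projectivity of $F$ gives $\Ext^1_{\class{A}}(F,X)=0$; applying condition (2) of Definition~\ref{def-cotorsion pair in an exact category} to the cotorsion pair $(\class{Q}\cap\class{W},\class{R})$ then forces $X\in\class{R}$. Thus $\class{R}=\class{A}$, so every object is fibrant.

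I do not expect a genuine obstacle here: once Theorem~\ref{them-Hovey's theorem for exact categories} has identified the trivially cofibrant objects with the left-hand class of $(\class{Q}\cap\class{W},\class{R})$, the result is immediate from the cotorsion-pair axioms together with the vanishing-$\Ext^1$ description of projectives. The only points deserving a moment's care are using the correct cotorsion pair — namely $(\class{Q}\cap\class{W},\class{R})$ rather than $(\class{Q},\class{R}\cap\class{W})$ — and the correct variance of $\Ext^1$, and, for deducing (2) by duality, noting that the opposite of an exact category is exact and that this passage preserves the defining conditions of an exact model structure.
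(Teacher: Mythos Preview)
Your proposal is correct and follows exactly the approach the paper takes: the paper's proof is a one-sentence remark that the statement is ``automatic when looking at the corresponding cotorsion pairs,'' and you have simply unpacked that remark by running the $\Ext^1$-characterization of projectives against the axioms of the cotorsion pair $(\class{Q}\cap\class{W},\class{R})$. There is nothing to add.
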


\begin{proof}
This is automatic when looking at the corresponding cotorsion pairs $(\class{Q},\class{R} \cap \class{W})$ and $(\class{Q} \cap \class{W} , \class{R})$ in $\cat{A}$. For example, the model structure is projective if and only if $\class{Q} \cap \class{W}$ is the class of projectives if and only if every object is in $\class{R}$.
\end{proof}

\begin{lemma}\label{lemma-characterizations of Frobenius model strucs}
Assume $(\class{A},\class{E})$ is an exact category with an exact model structure. Then the following are equivalent.
\begin{enumerate}
\item The model structure is Frobenius.
\item The model structure is both projective and injective.
\item Every object is both cofibrant and fibrant.
\item The corresponding cotorsion pairs are $(\class{A},\class{W})$ and $(\class{W},\class{A})$.
\item The projectives and injectives coincide and form the class $\class{W}$ of trivial objects. We often call an object in $\class{W}$ \emph{pro-injective}.
\end{enumerate}
\end{lemma}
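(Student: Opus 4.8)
The plan is to transfer everything to the language of the two associated complete cotorsion pairs $(\class{Q},\class{R}\cap\class{W})$ and $(\class{Q}\cap\class{W},\class{R})$ provided by Theorem~\ref{them-Hovey's theorem for exact categories}, and to invoke two standard facts about any cotorsion pair $(\class{F},\class{C})$ in an exact category: first, $\class{F} = \leftperp{\class{C}}$ and $\class{C} = \rightperp{\class{F}}$, which is immediate from Definition~\ref{def-cotorsion pair in an exact category}; second, $\class{F}$ contains every projective and $\class{C}$ contains every injective, since $\Ext^1_{\class{A}}(P,-) = 0$ for $P$ projective and $\Ext^1_{\class{A}}(-,I) = 0$ for $I$ injective. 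In particular $\leftperp{\class{I}} = \class{A} = \rightperp{\class{P}}$, where $\class{P}$ and $\class{I}$ denote the classes of projectives and injectives.

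I would then run the cycle (1) $\Leftrightarrow$ (2) $\Leftrightarrow$ (3) $\Leftrightarrow$ (4) $\Leftrightarrow$ (5). The equivalence (1) $\Leftrightarrow$ (2) is a tautology, since ``Frobenius'' was defined to be the conjunction of ``projective'' and ``injective''. The equivalence (2) $\Leftrightarrow$ (3) is exactly Lemma~\ref{lemma-projective and injective model structures}: the model structure is projective iff every object is fibrant, and injective iff every object is cofibrant.

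For (3) $\Leftrightarrow$ (4): if every object is cofibrant and fibrant then $\class{Q} = \class{A} = \class{R}$, so the two cotorsion pairs become $(\class{A},\class{A}\cap\class{W}) = (\class{A},\class{W})$ and $(\class{A}\cap\class{W},\class{A}) = (\class{W},\class{A})$; conversely, matching $(\class{A},\class{W})$ with $(\class{Q},\class{R}\cap\class{W})$ forces $\class{Q} = \class{A}$, and matching $(\class{W},\class{A})$ with $(\class{Q}\cap\class{W},\class{R})$ forces $\class{R} = \class{A}$, i.e.\ every object is cofibrant and fibrant. For (4) $\Rightarrow$ (5): reading $\class{W}$ as the right-hand class of $(\class{A},\class{W})$ gives $\class{W} = \rightperp{\class{A}} = \class{I}$, and reading it as the left-hand class of $(\class{W},\class{A})$ gives $\class{W} = \leftperp{\class{A}} = \class{P}$, so $\class{P} = \class{W} = \class{I}$. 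Finally, for (5) $\Rightarrow$ (4): since the left class $\class{Q}\cap\class{W}$ of one of the pairs contains every projective and $\class{P} = \class{W}$, we get $\class{Q}\cap\class{W} = \class{W}$, hence $\class{W}\subseteq\class{Q}$; dually $\class{R}\cap\class{W} = \class{W}$ using $\class{I}=\class{W}$. Then $\class{Q} = \leftperp{(\class{R}\cap\class{W})} = \leftperp{\class{W}} = \leftperp{\class{I}} = \class{A}$ and $\class{R} = \rightperp{(\class{Q}\cap\class{W})} = \rightperp{\class{W}} = \rightperp{\class{P}} = \class{A}$, so the cotorsion pairs are $(\class{A},\class{W})$ and $(\class{W},\class{A})$.

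Once this dictionary is in place the argument is essentially bookkeeping; the only step needing a moment's care is (5) $\Rightarrow$ (4), where one must combine the containment of the projectives in the left class (and injectives in the right class) of a cotorsion pair with the identities $\leftperp{\class{I}} = \class{A}$ and $\rightperp{\class{P}} = \class{A}$ in order to pin $\class{Q}$ and $\class{R}$ down to all of $\class{A}$.
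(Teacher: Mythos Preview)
Your argument is correct and follows essentially the same route as the paper: both translate the conditions into statements about the cotorsion pairs $(\class{Q},\class{R}\cap\class{W})$ and $(\class{Q}\cap\class{W},\class{R})$, invoke Lemma~\ref{lemma-projective and injective model structures} for the step between (2) and (3), and use the fact that the left (resp.\ right) class of any cotorsion pair contains all projectives (resp.\ injectives) to handle (5). The only cosmetic difference is that the paper closes the cycle via $(5)\Rightarrow(1)$ directly---once $\class{Q}\cap\class{W}=\class{W}=\class{R}\cap\class{W}$ is established, the definition of Frobenius is immediate---whereas you take the slightly longer path $(5)\Rightarrow(4)$ by further computing $\class{Q}=\leftperp{\class{W}}=\class{A}$ and $\class{R}=\rightperp{\class{W}}=\class{A}$.
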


\begin{proof}
$(1) \Rightarrow (2)$ by definition. $(2) \Rightarrow (3)$ by Lemma~\ref{lemma-projective and injective model structures}. For $(3) \Rightarrow (4)$ note that the cotorsion pairs $(\class{Q},\class{R} \cap \class{W})$ and $(\class{Q} \cap \class{W} , \class{R})$ must collapse to $(\class{A},\class{W})$ and $(\class{W},\class{A})$ and this implies (5) that $\class{W}$ must be the class of pro-injective objects.

$(5) \Rightarrow (1)$ Suppose $(\class{Q},\class{R} \cap \class{W})$ and $(\class{Q} \cap \class{W} , \class{R})$ are the corresponding cotorsion pairs where the trivial objects $\class{W}$ are the class of pro-injectives. Then it follows that $\class{W} = \class{Q} \cap \class{W} = \class{R} \cap \class{W}$ since the right side of a cotorsion pairs always contains the injectives while the left side always contains the projectives. So the model structure is Frobenius.
\end{proof}

\begin{corollary}\label{cor-characterizations of homotopic maps in inj. proj. Frob. model structures}
Assume $(\class{A},\class{E})$ is an exact category with an exact model structure.
\begin{enumerate}
\item Suppose $\cat{A}$ has a projective model structure. Then two maps $f,g \mathcolon X \ar Y$ in $\class{A}$ are right homotopic if and only if $g-f$ factors through a projective object. They are left homotopic if and only if $g-f$ factors through a trivial object. In particular, when $X$ is cofibrant $f$ and $g$ are homotopic if and only if $g-f$ factors through a projective if and only if $g-f$ factors through a trivial object.
\item Suppose $\cat{A}$ has an injective model structure. Then two maps $f,g \mathcolon X \ar Y$ in $\class{A}$ are left homotopic if and only if $g-f$ factors through an injective object. They are right homotopic if and only if $g-f$ factors through a trivial object. In particular, when $Y$ is fibrant $f$ and $g$ are homotopic if and only if $g-f$ factors through a injective if and only if $g-f$ factors through a trivial object.
\item Suppose $\cat{A}$ has a Frobenius model structure. Then two maps $f,g \mathcolon X \ar Y$ in $\class{A}$ are homotopic if and only if $g-f$ factors through a pro-injective object.
\end{enumerate}
\end{corollary}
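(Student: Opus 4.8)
\textbf{Proof proposal for Corollary~\ref{cor-characterizations of homotopic maps in inj. proj. Frob. model structures}.}
The plan is to deduce everything directly from Proposition~\ref{prop-left and right homotopic maps in exact model structures} by substituting the simplified descriptions of the two associated cotorsion pairs that hold in the projective, injective, and Frobenius cases. Recall from that proposition that, for any exact model structure with corresponding complete cotorsion pairs $(\class{Q},\class{R}\cap\class{W})$ and $(\class{Q}\cap\class{W},\class{R})$, two maps $f,g\colon X\ar Y$ are right homotopic exactly when $g-f$ factors through an object of $\class{Q}\cap\class{W}$, and left homotopic exactly when $g-f$ factors through an object of $\class{R}\cap\class{W}$. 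So the entire content of the corollary is the identification of these two classes in each special case, together with the standard fact that left and right homotopy agree on $\Hom_{\class{A}}(X,Y)$ when $X$ is cofibrant and $Y$ is fibrant.

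For (1), suppose the model structure is projective. By Lemma~\ref{lemma-projective and injective model structures} every object is fibrant, i.e. $\class{R}=\class{A}$, so the second cotorsion pair is $(\class{Q}\cap\class{W},\class{A})$. Since the left-hand class of a cotorsion pair whose right-hand class is all of $\class{A}$ is precisely the class $\class{P}$ of projective objects (recall $(\class{P},\class{A})$ is the projective cotorsion pair), we get $\class{Q}\cap\class{W}=\class{P}$; likewise $\class{R}\cap\class{W}=\class{A}\cap\class{W}=\class{W}$. Plugging these into Proposition~\ref{prop-left and right homotopic maps in exact model structures}(1),(2) gives the stated characterizations of right and left homotopy. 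For the final clause, when $X$ is cofibrant (and $Y$ is automatically fibrant) left and right homotopy coincide, so both ``factors through a projective'' and ``factors through a trivial object'' are simultaneously equivalent to $f\sim g$. Statement (2) is the exact dual: an injective model structure has $\class{Q}=\class{A}$ by Lemma~\ref{lemma-projective and injective model structures}, hence $\class{R}\cap\class{W}=\class{I}$ (the injectives) and $\class{Q}\cap\class{W}=\class{W}$, and one concludes as in (1), using that when $Y$ is fibrant the two homotopy relations agree.

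For (3), if the model structure is Frobenius then by Lemma~\ref{lemma-characterizations of Frobenius model strucs} the corresponding cotorsion pairs are $(\class{A},\class{W})$ and $(\class{W},\class{A})$, every object is both cofibrant and fibrant, and $\class{W}$ is exactly the class of pro-injective objects. Hence $\class{Q}\cap\class{W}=\class{R}\cap\class{W}=\class{W}$, and since every $X$ is cofibrant and every $Y$ is fibrant, $\sim^l$, $\sim^r$ and $\sim$ all coincide; Proposition~\ref{prop-left and right homotopic maps in exact model structures} then gives that $f\sim g$ iff $g-f$ factors through an object of $\class{W}$, i.e. through a pro-injective object. I expect no genuine obstacle: the substantive work is entirely carried out in Proposition~\ref{prop-left and right homotopic maps in exact model structures} and Lemmas~\ref{lemma-projective and injective model structures} and~\ref{lemma-characterizations of Frobenius model strucs}. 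The only point requiring a little care is correctly identifying $\class{Q}\cap\class{W}$ with $\class{P}$ (resp.\ $\class{R}\cap\class{W}$ with $\class{I}$), which rests on $(\class{P},\class{A})$ and $(\class{A},\class{I})$ being the projective and injective cotorsion pairs.
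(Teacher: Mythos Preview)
Your proposal is correct and takes essentially the same approach as the paper, which simply states that the corollary follows from Proposition~\ref{prop-left and right homotopic maps in exact model structures}. You have merely spelled out the substitutions (using Lemmas~\ref{lemma-projective and injective model structures} and~\ref{lemma-characterizations of Frobenius model strucs}) that make this immediate; note that in part~(1) you could also read off $\class{Q}\cap\class{W}=\class{P}$ directly from the definition of a projective model structure rather than deducing it from the cotorsion pair $(\class{Q}\cap\class{W},\class{A})$, but either way is fine.
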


\begin{proof}
This follows from Proposition~\ref{prop-left and right homotopic maps in exact model structures}.
\end{proof}

\section{Applications and Examples}\label{sec-examples and applications}

There are many examples of exact model structures and in particular injective, projective and Frobenius ones. These include the usual projective and injective model structures on chain complexes of $R$-modules, and the model structure on modules over a Frobenius ring $R$. Each of these are described nicely in~\cite{hovey-model-categories}. There are also both injective and projective model structures on the categories of modules over Gorenstein rings, or more generally Ding-Chen rings, which are discussed in detail in~\cite{hovey},\cite{gillespie-hovey} and~\cite{gillespie-model strucs on modules over Ding-Chen rings}. Our intention in this section is to look at two other interesting occurrences of these types of model structures. First, we see the general fact that any \emph{hereditary} exact model structure contains exact sub-model structures that are projective, injective and Frobenius. The second is the Frobenius model structure describing the classical homotopy category of complexes $\cat{K}(R)$.

\subsection{Sub-model structures of hereditary exact model structures}

In the standard approach to constructing the homotopy category $\text{Ho}\cat{M}$ of a model category $\cat{M}$ we generally consider the equivalent localization categories $\text{Ho}\cat{M}_c$, $\text{Ho}\cat{M}_f$, and $\text{Ho}\cat{M}_{c,f}$, and use the isomorphism $\text{Ho}\cat{M}_{c,f} \cong \cat{M}_{c,f}/\sim$ where $\cat{M}_{c,f}/\sim$ is the classical homotopy category. But of course model structures are meant to describe localization categories and homotopy, so it is natural ask if there are model structures on $\cat{M}_c$, $\cat{M}_f$, or $\cat{M}_{c,f}$. There are very natural model structures describing these localizations when $\cat{A}$ is an hereditary exact model structure on a weakly idempotent complete category $(\cat{A},\class{E})$, in particular when $\cat{A}$ is an hereditary abelian model structure. Of course, all of the homotopy categories with respect to these model structures are equivalent categories. But our point is that $\cat{A}_c$, $\cat{A}_f$, and $\cat{A}_{c,f}$ can each be thought of as exact sub-model structures of $\cat{A}$. Moreover, these sub-model structures are respectively injective, projective and Frobenius.

Throughout this section, we assume $\cat{A}$ is a weakly idempotent complete exact category and that $\cat{A}$ has an hereditary exact model structure. By \emph{hereditary} we mean that the corresponding cotorsion pairs $(\class{Q},\class{R} \cap \class{W})$ and $(\class{Q} \cap \class{W} , \class{R})$ are each hereditary. Let $\cat{A}_f$ be the full subcategory of fibrant objects and let $\cat{A}_c$ be the full subcategory of cofibrant objects and $\cat{A}_{c,f}$ the full subcategory of cofibrant-fibrant objects.

It is easy to see that if $\class{S}$ is any full additive subcategory of $\class{A}$ and if $\class{S}$ is closed under extensions then $\class{S}$ becomes an exact category where the class of short exact sequences is taken to be all short exact sequences in $\cat{A}$ in which all three terms are objects from $\class{S}$. A subcategory such as $\cat{S}$ equipped with these short exact sequences is called a \emph{fully exact subcategory} of $\cat{A}$. In this way $\cat{A}_f$, $\cat{A}_c$ and $\cat{A}_{c,f}$ are each fully exact subcategories of $\cat{A}$. They are also each closed under direct summands and so the following lemma tells us that the inherited exact structures are each weakly idempotent complete.

\begin{lemma}\label{lemma-full subcategories closed under retracts are weakly idempotent complete}
Let $\class{S}$ be a fully exact subcategory of the weakly idempotent complete exact category $\class{A}$. If $\class{S}$ is closed under direct summands then $\class{S}$ is also weakly idempotent complete.
\end{lemma}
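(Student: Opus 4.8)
The plan is to verify directly that $\class{S}$ satisfies the definition of a weakly idempotent complete exact category, namely that every split monomorphism in $\class{S}$ has a cokernel in $\class{S}$ and every split epimorphism in $\class{S}$ has a kernel in $\class{S}$. The two statements are dual, so I would write out only the first. The key observation is that since $\class{S}$ is a fully exact subcategory, its short exact sequences are precisely those short exact sequences of $\class{A}$ with all three terms in $\class{S}$, so I can freely import structure from $\class{A}$ as long as I can certify that the relevant objects land back in $\class{S}$.

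First I would take a split monomorphism $f \mathcolon X \ar Y$ in $\class{S}$, with retraction $g \mathcolon Y \ar X$ satisfying $gf = 1_X$. Since $\class{A}$ is weakly idempotent complete, Proposition~\ref{prop-properties of weakly idempotent complete categories}(1) applies in $\class{A}$: the map $f$ is an admissible monomorphism of $\class{A}$, and if $Y \twoheadrightarrow Z$ is its cokernel in $\class{A}$, then $X \rightarrowtail Y \twoheadrightarrow Z$ is isomorphic to the split exact sequence $X \rightarrowtail X \oplus Z \twoheadrightarrow Z$. In particular $Z$ is (isomorphic to) a direct summand of $Y$. Since $Y \in \class{S}$ and $\class{S}$ is closed under direct summands, we get $Z \in \class{S}$. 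Then the short exact sequence $X \rightarrowtail Y \twoheadrightarrow Z$ has all three terms in $\class{S}$, hence is a short exact sequence of the fully exact structure on $\class{S}$; in particular the map $Y \twoheadrightarrow Z$ really is a cokernel of $f$ inside $\class{S}$ (a cokernel computed in $\class{A}$ among objects of $\class{S}$ remains a cokernel in the full subcategory $\class{S}$, since the defining universal property is tested against morphisms into objects of $\class{S}$, a subset of those tested in $\class{A}$). Thus every split monomorphism in $\class{S}$ has a cokernel in $\class{S}$. Dually, using Proposition~\ref{prop-properties of weakly idempotent complete categories}(2) and closure of $\class{S}$ under direct summands, every split epimorphism in $\class{S}$ has a kernel in $\class{S}$. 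Therefore $\class{S}$ is weakly idempotent complete.

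I do not anticipate a serious obstacle here: the argument is essentially bookkeeping, transporting the split-exact-sequence description from Proposition~\ref{prop-properties of weakly idempotent complete categories} down to $\class{S}$. The one point that deserves a sentence of care is why a cokernel (resp.\ kernel) computed in $\class{A}$ whose terms happen to lie in $\class{S}$ is still a cokernel (resp.\ kernel) in the full subcategory $\class{S}$; this is immediate because $\class{S}$ is full, so every competing cocone in $\class{S}$ is already a cocone in $\class{A}$ and factors uniquely as required. Alternatively, one could simply cite Corollary~7.5 of~\cite{buhler-exact categories} — that weak idempotent completeness is equivalent to every split mono being admissible — and note that a split mono in $\class{S}$ is a split mono in $\class{A}$, hence admissible in $\class{A}$ with cokernel $Z$ a summand of an object of $\class{S}$, hence $Z \in \class{S}$ and the sequence is admissible in $\class{S}$; this shortens the writeup.
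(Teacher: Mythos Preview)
Your proposal is correct and follows essentially the same argument as the paper: view a split monomorphism of $\class{S}$ as one in $\class{A}$, invoke Proposition~\ref{prop-properties of weakly idempotent complete categories} to identify its cokernel $Z$ as a direct summand of $Y$, and use closure under direct summands to conclude $Z\in\class{S}$. The paper's proof is terser (it does not spell out why the cokernel in $\class{A}$ remains a cokernel in the full subcategory, nor the dual case), but the content is the same.
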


\begin{proof}
Suppose that $\class{S}$ is closed under direct summands and let $i \mathcolon X \ar Y$ be a split monomorphism in $\class{S}$. We want to see that $i$ has a cokernel in $\class{S}$. Viewing $i$ as a split monomorphism in $\class{A}$ we know it has a cokernel $Y \ar Z$ and from Proposition~\ref{prop-properties of weakly idempotent complete categories} we know that $X \rightarrowtail Y \twoheadrightarrow Z$ is isomorphic to the short exact sequence $X \rightarrowtail X \oplus Z \twoheadrightarrow Z$. Since $Y \cong X \oplus Z$ is in $\class{S}$, the direct summand $Z \in \class{S}$ and in particular the cokernel of $i$ is in $\class{S}$. So $\class{S}$ is weakly idempotent complete.
\end{proof}

We fix the following notation and definitions on the given category $\cat{A}$:

$\class{W}_f$ is the class of trivial objects in $\cat{A}_f$. So $\class{W}_f = \class{W} \cap \class{R}$.

$\class{W}_c$ is the class of trivial objects in $\cat{A}_c$. So $\class{W}_c = \class{W} \cap \class{Q}$.

$\class{W}_{c,f}$ is the class of trivial objects in $\cat{A}_{c,f}$. So $\class{W}_{c,f} = \class{W} \cap \class{Q} \cap \class{R}$.

$\class{Q}_f$ is the class of cofibrant objects in $\cat{A}_f$. So $\class{Q}_f = \class{Q} \cap \class{R}$.

$\class{Q}_c$ is the class of cofibrant objects in $\cat{A}_c$. So $\class{Q}_c = \class{Q}$.

$\class{Q}_{c,f}$ is the class of cofibrant objects in $\cat{A}_{c,f}$. So $\class{Q}_{c,f} = \class{Q} \cap \class{R}$.

$\class{R}_f$ is the class of fibrant objects in $\cat{A}_f$. So $\class{R}_f = \class{R}$.

$\class{R}_c$ is the class of fibrant objects in $\cat{A}_c$. So $\class{R}_c = \class{R} \cap \class{Q}$.

$\class{R}_{c,f}$ is the class of fibrant objects in $\cat{A}_{c,f}$. So $\class{R}_{c,f} = \class{Q} \cap \class{R}$.

\begin{proposition}\label{prop-submodel structures of an abelian model cat}
Let $\cat{A}$ be a weakly idempotent complete exact category with an exact model structure. Suppose the associated complete cotorsion pairs are $(\class{Q},\class{R} \cap \class{W})$ and $(\class{Q} \cap \class{W} , \class{R})$ are each hereditary. Then the following each hold:
\begin{enumerate}
\item Each of $\cat{A}_c$, $\cat{A}_f$ and $\class{A}_{c,f}$ are weakly idempotent complete exact subcategories of $\class{A}$ under the exact structure naturally inherited from $\class{A}$.

\item $(\class{Q}_f,\class{R}_f \cap \class{W}_f)$ and $(\class{Q}_f \cap \class{W}_f , \class{R}_f)$ are both complete hereditary cotorsion pairs in $\cat{A}_f$. The resulting exact model structure on $\cat{A}_f$ is a projective model structure.

\item $(\class{Q}_c,\class{R}_c \cap \class{W}_c)$ and $(\class{Q}_c \cap \class{W}_c , \class{R}_c)$ are both complete hereditary cotorsion pairs in $\cat{A}_c$. The resulting exact model structure on $\cat{A}_c$ is an injective model structure.

\item $(\class{Q}_{c,f},\class{R}_{c,f} \cap \class{W}_{c,f})$ and $(\class{Q}_{c,f} \cap \class{W}_{c,f} , \class{R}_{c,f})$ are both complete hereditary cotorsion pairs on $\cat{A}_{c,f}$. The resulting exact model structure on $\cat{A}_{c,f}$ is a Frobenius model structure.
\end{enumerate}

\end{proposition}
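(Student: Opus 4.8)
The plan is to reduce the whole statement to the converse half of Theorem~\ref{them-Hovey's theorem for exact categories}: for each of $\cat{A}_f$, $\cat{A}_c$, $\cat{A}_{c,f}$ I will check that the two pairs named in the notation above are complete hereditary cotorsion pairs and that the associated class of trivial objects is thick, then read off the model structure and identify its type. I will carry out the argument for $\cat{A}_f$ (whose objects are exactly $\class{R}$) in detail; the case of $\cat{A}_c$ (objects $\class{Q}$) is dual, and $\cat{A}_{c,f}$ (objects $\class{Q}\cap\class{R}$) combines the two. Part (1) comes first and uses nothing about hereditariness: $\class{Q}$ and $\class{R}$, being the left and right classes of cotorsion pairs, are closed under extensions and, by additivity of $\Ext^1_{\cat{A}}$ together with axioms (2) and (3) of Definition~\ref{def-cotorsion pair in an exact category}, under direct summands, and each contains $0$; hence $\cat{A}_f$, $\cat{A}_c$, $\cat{A}_{c,f}$ are fully exact subcategories of $\cat{A}$ closed under direct summands, and Lemma~\ref{lemma-full subcategories closed under retracts are weakly idempotent complete} shows they are weakly idempotent complete.

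The observation underlying (2)--(4) is that if $\class{S}\subseteq\cat{A}$ is an extension-closed fully exact subcategory and $M,N\in\class{S}$, then every short exact sequence $N\rightarrowtail Z\twoheadrightarrow M$ of $\cat{A}$ automatically lies in $\class{S}$, so $\Ext^1_{\class{S}}(M,N)=0$ if and only if $\Ext^1_{\cat{A}}(M,N)=0$. In $\cat{A}_f$ one has $\class{Q}_f=\class{Q}\cap\class{R}$, $\class{R}_f\cap\class{W}_f=\class{R}\cap\class{W}$ and $\class{Q}_f\cap\class{W}_f=\class{Q}\cap\class{R}\cap\class{W}$, so the $\Ext$-orthogonality axiom (1) for both sub-pairs $(\class{Q}_f,\class{R}_f\cap\class{W}_f)$ and $(\class{Q}_f\cap\class{W}_f,\class{R}_f)$ is just the restriction of the corresponding statement for $(\class{Q},\class{R}\cap\class{W})$ and $(\class{Q}\cap\class{W},\class{R})$ in $\cat{A}$.

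The main step is completeness of the sub-pairs, and this is where the hereditary hypothesis is needed. Given $X\in\cat{A}_f$, completeness of the ambient pairs gives admissible sequences $C\rightarrowtail F\twoheadrightarrow X$, $X\rightarrowtail C'\twoheadrightarrow F'$ with $C,C'\in\class{R}\cap\class{W}$ and $F,F'\in\class{Q}$, and $D\rightarrowtail G\twoheadrightarrow X$, $X\rightarrowtail D'\twoheadrightarrow G'$ with $D,D'\in\class{R}$ and $G,G'\in\class{Q}\cap\class{W}$. In $C\rightarrowtail F\twoheadrightarrow X$ and $D\rightarrowtail G\twoheadrightarrow X$ the middle term is an extension of objects of $\class{R}$, hence lies in $\class{R}=\cat{A}_f$, so $F\in\class{Q}\cap\class{R}$ and $G\in\class{Q}\cap\class{R}\cap\class{W}$ and these witness ``enough projectives'' for the two sub-pairs. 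In $X\rightarrowtail C'\twoheadrightarrow F'$ and $X\rightarrowtail D'\twoheadrightarrow G'$ the right-hand term is the cokernel of an admissible monomorphism between objects of $\class{R}$; since $(\class{Q}\cap\class{W},\class{R})$ is hereditary, $\class{R}$ is closed under such cokernels, so $F'\in\class{Q}\cap\class{R}$ and $G'\in\class{Q}\cap\class{R}\cap\class{W}$ and these witness ``enough injectives''. (For $\cat{A}_c$ one dualizes, using instead that $\class{Q}$ is closed under kernels of admissible epimorphisms, i.e.\ hereditariness of $(\class{Q},\class{R}\cap\class{W})$; for $\cat{A}_{c,f}$ both are used.) Axioms (2) and (3) of a cotorsion pair then follow by the standard splitting argument: all the classes in sight, being intersections of $\class{Q}$, $\class{R}$ and the thick class $\class{W}$, are closed under direct summands, so if $\Ext^1(F,X)=0$ for every $F$ in the left class, a special preenvelope $X\rightarrowtail C\twoheadrightarrow F'$ has $\Ext^1(F',X)=0$, hence splits, exhibiting $X$ as a direct summand of $C$ and therefore an object of the right class, and dually. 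The genuinely delicate point is exactly here: closure under extensions alone handles ``enough projectives'' but not ``enough injectives'', and it is the hereditary hypothesis that supplies the closure under the relevant cokernels (dually, kernels) needed to keep the special preenvelopes inside the smaller category.

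What remains is bookkeeping. A kernel or cokernel formed inside $\cat{A}_f$ automatically lies in $\cat{A}_f=\class{R}$, so hereditariness of the sub-pairs reduces to hereditariness of the ambient pairs for the $\class{Q}$-part and to thickness of $\class{W}$ in $\cat{A}$ for the $\class{W}$-part; and $\class{W}_f=\class{W}\cap\class{R}$ is closed under direct summands and has the two-out-of-three property for short exact sequences of $\cat{A}_f$, because those are short exact sequences of $\cat{A}$, $\class{W}$ is thick in $\cat{A}$, and the third term automatically lies in $\class{R}$. Hence the converse half of Theorem~\ref{them-Hovey's theorem for exact categories} applies to the weakly idempotent complete exact category $\cat{A}_f$ with its two compatible complete cotorsion pairs and thick $\class{W}_f$, producing an exact model structure whose cofibrant, fibrant and trivial objects are $\class{Q}_f$, $\class{R}_f$, $\class{W}_f$ by Corollary~\ref{cor-Hovey's theorem for exact categories}. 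Since every object of $\cat{A}_f$ lies in $\class{R}=\class{R}_f$, every object is fibrant, so Lemma~\ref{lemma-projective and injective model structures}(1) identifies the structure as projective; dually every object of $\cat{A}_c$ is cofibrant, so its structure is injective; and every object of $\cat{A}_{c,f}$ is both cofibrant and fibrant, so by Lemma~\ref{lemma-characterizations of Frobenius model strucs} its structure is Frobenius.
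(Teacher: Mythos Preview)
Your proof is correct and follows essentially the same approach as the paper: both reduce to the converse of Theorem~\ref{them-Hovey's theorem for exact categories} by verifying that the restricted pairs are complete cotorsion pairs in the subcategory, using the hereditary hypothesis (the paper phrases it as ``$X,R$ are each in the coresolving class $\class{R}$'') to ensure the special preenvelopes stay inside $\cat{A}_f$. The only organizational difference is that you establish completeness first and then deduce axioms~(2) and~(3) via the standard splitting argument, whereas the paper verifies the three cotorsion pair axioms directly and leaves completeness, thickness of $\class{W}_f$, and hereditariness of the sub-pairs implicit; your write-up is in that sense more thorough about checking all hypotheses of Theorem~\ref{them-Hovey's theorem for exact categories}.
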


\begin{proof}This first statement comes from Lemma~\ref{lemma-full subcategories closed under retracts are weakly idempotent complete}.
We will show that $(\class{Q}_f,\class{R}_f \cap \class{W}_f)$ is a complete cotorsion pair in $\cat{A}_f$. Proving that the others are complete cotorsion pairs is similar.

First say $Q \in \class{Q}_f = \class{Q} \cap \class{R}$ and $R \in \class{R}_f \cap \class{W}_f = \class{R} \cap \class{W}$. Then since $(\class{Q},\class{R} \cap \class{W})$ is a cotorsion pair in $\class{A}$, any short exact sequence $R \rightarrowtail Z \twoheadrightarrow Q$ in $\cat{A}_f$ must split. So $\Ext_{\cat{A}_f}(Q,R) = 0$.

Next, say $X \in \cat{A}_f$ and $\Ext_{\cat{A}_f}(X,R) = 0$ for all $R \in \class{R}_f \cap \class{W}_f$. We want to show $X \in \class{Q}_f$ and this requires showing $X \in \class{Q}$. To show $X \in \class{Q}$ just let $R \in \class{R} \cap \class{W}$ be arbitrary and argue that $\Ext_{\cat{A}}(X,R) = 0$. But this is clearly true by hypothesis since any short exact sequence $R \rightarrowtail Z \twoheadrightarrow X$ in $\Ext_{\cat{A}}(X,R)$ is an element of $\Ext_{\cat{A}_f}(X,R) = 0$.

Lastly, say $X \in \cat{A}_f$ and $\Ext_{\cat{A}_f}(Q,X) = 0$ for all $Q \in \class{Q}_f$. We want to show $X \in \class{R}_f \cap \class{W}_f = \class{R} \cap \class{W}$. Since $(\class{Q},\class{R} \cap \class{W})$ is a complete cotorsion pair in $\cat{A}$ we get a short exact sequence $X  \rightarrowtail R \twoheadrightarrow Q $ where $R \in \class{R} \cap \class{W}$ and $Q \in \class{Q}$. Since $X,R$ are each in the coresolving class $\class{R}$ we get $Q \in \class{R}$. This means that $Q \in \class{Q}_f$ and $X \rightarrowtail R \twoheadrightarrow Q$ is an element of $\Ext_{\cat{A}_f}(Q,X) = 0$. Therefore the sequence must split and so $X$ is a direct summand of $R$. It follows that $X \in \class{R} \cap \class{W}$.

Now the model structure on $\cat{A}_f$ determined by $(\class{Q}_f,\class{R}_f \cap \class{W}_f)$ and $(\class{Q}_f \cap \class{W}_f , \class{R}_f)$ must be a projective model structure since each object is fibrant. The class $\class{Q}_f \cap \class{W}_f$ are the projective objects in $\cat{A}_f$. Similarly, the model structure on $\cat{A}_c$ is injective since each object is cofibrant and $\class{R}_c \cap \class{W}_c$ are the injective objects. The model structure on $\cat{A}_{c,f}$ is Frobenius because every object is both cofibrant and fibrant and $\class{W}_{c,f}$ are the pro-injective objects.
\end{proof}

\begin{definition}
Let $\cat{A}_0 \subseteq \cat{A}$ be a fully exact subcategory with an exact model structure. We call $\cat{A}_0$ a \emph{full equivalent sub-model structure} if the inclusion functor $i \mathcolon \cat{A}_0 \ar \cat{A}$ preserves the model structure and if the induced functor $\text{Ho}(i) \mathcolon \text{Ho}\cat{A}_0 \ar \text{Ho}\cat{A}$ displays $\text{Ho}\cat{A}_0$ as a full equivalent subcategory of $\text{Ho}\cat{A}$.
\end{definition}

\begin{corollary}\label{prop-commutative diagram of homotopy categories}
Let $\cat{A}$ be a weakly idempotent complete exact category with an hereditary exact model structure. Then with the model structure from Proposition~\ref{prop-submodel structures of an abelian model cat}, $\cat{A}_c$, $\cat{A}_f$, and $\cat{A}_{c,f}$ are full equivalent sub-model structures of $\cat{A}$. That is, each functor $\text{Ho}(i)$ in the induced commutative diagram below is an inclusion and displays each source category as a full equivalent subcategory of the corresponding target category:
$$\begin{CD}
\text{Ho}(\cat{A}_{c,f})    @>\text{Ho}i>> \text{Ho}(\cat{A}_f) \\
@V \text{Ho}i VV  @VV \text{Ho}i V \\
\text{Ho}(\cat{A}_c) @>\text{Ho}i>> \text{Ho}(\cat{A}). \\
\end{CD}$$
\end{corollary}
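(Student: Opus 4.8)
The plan is to reduce the entire statement to the common subcategory $\cat{A}_{c,f}$ of cofibrant--fibrant objects and then invoke the Fundamental Theorem of model categories. Write $\cat{A}_0$ for any one of $\cat{A}_c$, $\cat{A}_f$, $\cat{A}_{c,f}$, carrying its exact model structure from Proposition~\ref{prop-submodel structures of an abelian model cat}, with cotorsion pairs $(\class{Q}_0,\class{R}_0\cap\class{W}_0)$ and $(\class{Q}_0\cap\class{W}_0,\class{R}_0)$ given by the tables preceding that proposition. \emph{First (the inclusion is compatible with the model structures):} from those tables one reads off $\class{Q}_0\subseteq\class{Q}$, $\class{R}_0\subseteq\class{R}$, $\class{Q}_0\cap\class{W}_0\subseteq\class{Q}\cap\class{W}$ and $\class{R}_0\cap\class{W}_0\subseteq\class{R}\cap\class{W}$ in each of the three cases. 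Since $\cat{A}_0$ is a fully exact subcategory, an admissible short exact sequence in $\cat{A}_0$ is admissible in $\cat{A}$, so $i\colon\cat{A}_0\to\cat{A}$ carries (trivial) cofibrations to (trivial) cofibrations and (trivial) fibrations to (trivial) fibrations; because a weak equivalence is exactly a trivial cofibration followed by a trivial fibration (the remark after Definition~\ref{def-exact model structures}), $i$ is homotopical. By Lemma~\ref{lemma-definition of HoF for a homotopical functor F} it induces $\text{Ho}(i)\colon\text{Ho}(\cat{A}_0)\to\text{Ho}(\cat{A})$, which is the identity on objects; as $\text{Ob}(\cat{A}_0)\subseteq\text{Ob}(\cat{A})$ this is an inclusion on objects.

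\emph{Next (the cofibrant--fibrant level agrees):} a short intersection computation with the same tables gives $\class{Q}_0\cap\class{R}_0=\class{Q}\cap\class{R}$ and $\class{Q}_0\cap\class{R}_0\cap\class{W}_0=\class{Q}\cap\class{R}\cap\class{W}$ in all three cases. Hence the full subcategory $(\cat{A}_0)_{c,f}$ of cofibrant--fibrant objects of $\cat{A}_0$ is exactly $\cat{A}_{c,f}$. Moreover, by Proposition~\ref{prop-left and right homotopic maps in exact model structures}(5), for $X,Y\in\cat{A}_{c,f}$ two maps $f,g\colon X\to Y$ are homotopic --- whether the homotopy is computed in $\cat{A}_0$ or in $\cat{A}$ --- if and only if $g-f$ factors through an object of $\class{Q}_0\cap\class{R}_0\cap\class{W}_0=\class{Q}\cap\class{R}\cap\class{W}$; since $\cat{A}_0$ is full and contains this class, the two factorization conditions coincide. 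Therefore $\pi(\cat{A}_0)_{c,f}$ and $\pi\cat{A}_{c,f}$ are literally the same category.

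\emph{Then (conclude via the Fundamental Theorem):} for any exact model structure $\cat{B}$ the functor $\gamma_{\cat{B}}$ identifies homotopic maps between cofibrant--fibrant objects, and the induced functor $\Phi_{\cat{B}}\colon\pi\cat{B}_{c,f}\to\text{Ho}(\cat{B})$ is an equivalence (the Fundamental Theorem of model categories, which is available here by the discussion opening Section~\ref{sec-the homotopy category of an exact model structure}; see~\cite{dwyer-spalinski}). Applying this to $\cat{A}$ and to $\cat{A}_0$, and restricting the identity $\text{Ho}(i)\circ\gamma_{\cat{A}_0}=\gamma_{\cat{A}}\circ i$ of Lemma~\ref{lemma-definition of HoF for a homotopical functor F} to $\cat{A}_{c,f}=(\cat{A}_0)_{c,f}$, one gets $\text{Ho}(i)\circ\Phi_{\cat{A}_0}=\Phi_{\cat{A}}$ after composing with the quotient $\cat{A}_{c,f}\to\pi\cat{A}_{c,f}$; since that quotient is bijective on objects and full, this forces $\text{Ho}(i)\circ\Phi_{\cat{A}_0}=\Phi_{\cat{A}}$, so $\text{Ho}(i)=\Phi_{\cat{A}}\circ\Phi_{\cat{A}_0}^{-1}$ is an equivalence of categories which is the identity on objects. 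It therefore realizes $\text{Ho}(\cat{A}_0)$ as the full subcategory of $\text{Ho}(\cat{A})$ on $\text{Ob}(\cat{A}_0)$, equivalent to all of $\text{Ho}(\cat{A})$ (essential surjectivity simply records that every $X\in\cat{A}$ has a fibrant, cofibrant, or cofibrant--fibrant replacement in the relevant $\cat{A}_0$, using Fact~\ref{fact-basic facts on homotopy and homotopy category}(6)). The same argument applies verbatim to each inclusion appearing in the displayed square --- namely $\cat{A}_{c,f}\hookrightarrow\cat{A}_f$, $\cat{A}_{c,f}\hookrightarrow\cat{A}_c$, $\cat{A}_f\hookrightarrow\cat{A}$ and $\cat{A}_c\hookrightarrow\cat{A}$ --- because in every case the cofibrant--fibrant subcategory together with its homotopy relation is the common pair $(\cat{A}_{c,f},\sim)$; and the square of homotopy categories commutes because the square of inclusions does and $\text{Ho}(-)$ is functorial (by the uniqueness clause of Lemma~\ref{lemma-definition of HoF for a homotopical functor F}).

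The hard part, and essentially the only non-formal input, will be the bookkeeping step: verifying that $\class{Q}_0\cap\class{R}_0$ and $\class{Q}_0\cap\class{R}_0\cap\class{W}_0$ collapse to $\class{Q}\cap\class{R}$ and $\class{Q}\cap\class{R}\cap\class{W}$, and that, by Proposition~\ref{prop-left and right homotopic maps in exact model structures}, the homotopy relation on $\cat{A}_{c,f}$ is insensitive to whether one works inside $\cat{A}_0$ or inside $\cat{A}$. Once this is in place, everything reduces to the universal property of $\text{Ho}$ and the Fundamental Theorem, both of which hold for exact model structures; the rest is the diagram chase recorded above.
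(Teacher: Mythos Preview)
Your proof is correct, and it takes a route that is related to but organized differently from the paper's. The paper argues directly from the explicit formula for $\text{Ho}F$ in Lemma~\ref{lemma-definition of HoF for a homotopical functor F}: for the inclusion $i$ one simply reads off that $\text{Ho}(i)([f'])=[f']$, so $\text{Ho}(i)$ is fully faithful on the nose, and then essential surjectivity is checked separately via cofibrant and fibrant replacements (citing MacLane~IV.4.2). You instead make the conceptual observation that $(\cat{A}_0)_{c,f}=\cat{A}_{c,f}$ as categories with the same homotopy relation (this is exactly the content of the Remark the paper places \emph{after} the corollary), and then invoke the Fundamental Theorem to identify every $\text{Ho}(\cat{A}_0)$ with the common $\pi\cat{A}_{c,f}$, forcing $\text{Ho}(i)$ to be an equivalence by two-out-of-three for equivalences. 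Your approach buys a cleaner explanation of \emph{why} the result holds---all four homotopy categories are modeled by the same $\pi\cat{A}_{c,f}$---at the cost of a bit more bookkeeping; the paper's approach is shorter but leans on unwinding the formula in Lemma~\ref{lemma-definition of HoF for a homotopical functor F}. One small point of hygiene: when you write $\text{Ho}(i)=\Phi_{\cat{A}}\circ\Phi_{\cat{A}_0}^{-1}$, remember that $\Phi_{\cat{A}_0}$ is only an equivalence, not an isomorphism; the conclusion that $\text{Ho}(i)$ is an equivalence is nonetheless correct, by two-out-of-three applied to $\text{Ho}(i)\circ\Phi_{\cat{A}_0}=\Phi_{\cat{A}}$.
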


\begin{proof}
Each inclusion functor $i$ is clearly model structure preserving. So if we take $F$ to be one of the inclusion functors $i$, and follow the rule stated for $\text{Ho}F$ in Lemma~\ref{lemma-definition of HoF for a homotopical functor F} we see that for an arrow $f = [f']$, we have $\text{Ho}i(f) = [i(f')] = [f']$ is the identity on arrows. So the homotopy categories sit properly as full subcategories. To see that each $\text{Ho}i$ is a full equivalence we use Proposition~IV.4.2 of~\cite{maclane-cat work}. For example, the functor $\text{Ho}i \mathcolon \text{Ho}(\cat{A}_{c}) \ar \text{Ho}(\cat{A})$ is an equivalence since given any object $A \in \text{Ho}(\cat{A})$ we have the isomorphism $\gamma_{\cat{A}}(p_X) \mathcolon QX \ar X$ in $\text{Ho}(\cat{A})$. In particular $Q$ induces a the inverse equivalence $\text{Ho}Q \mathcolon \text{Ho}(\cat{A}) \ar \text{Ho}(\cat{A}_c)$.
\end{proof}

\begin{remark}
It follows from Propositions~\ref{prop-left and right homotopic maps in exact model structures} and~\ref{prop-submodel structures of an abelian model cat} that the left and right homotopy relations are unambiguous when considered in either $\class{A}$ or one of its sub-model structures $\class{A}_c$, $\class{A}_f$ and $\class{A}_{c,f}$. For example, two maps in $\class{A}_c$, are left (resp. right) homotopic if and only if they are left (resp. right) homotopic in $\class{A}$. In particular, the notation $\pi\cat{A}_{c,f} := \cat{A}_{c,f}/\sim$ for the homotopy category is unambiguous.
\end{remark}

\subsection{Classical homotopy theory of chain complexes}

Let $R$ be a ring. We now show the details to a very simple construction of the model structure on $\text{Ch}(R)$ describing the classical homotopy category $\class{K}(R)$, where the morphism sets are chain homotopy classes of maps. The construction of this model structure was the subject of the paper~\cite{homotopy category of chain complexes is a homotopy category} and its description in terms of cotorsion pairs was pointed out in~\cite{hovey}. This model structure is a nice example of a Frobenius model structure.

Let $\text{Ch}(R)_{dw}$ be the exact category $(\cat{A},\class{E})$, where $\cat{A}$ is the category $\text{Ch}(R)$ of chain complexes of $R$-modules and $\class{E}$ is the class of all degreewise split exact sequences. Then one can check that $\text{Ch}(R)_{dw}$ is a weakly idempotent complete exact category. Let $\class{A}$ denote the class of all complexes and $\class{W}$ the class of all split exact complexes (i.e., contractible complexes).

\begin{corollary}\label{cor-the cotorsion pairs inducing classical homotopy theory}
The class $\class{W}$ of contractible complexes is a thick subcategory of $\text{Ch}(R)_{dw}$. Both $(\class{A},\class{W})$ and $(\class{W},\class{A})$ are complete cotorsion pairs with respect to $\text{Ch}(R)_{dw}$. The corresponding model structure on $\text{Ch}(R)_{dw}$ is described as follows. The cofibrations (resp. trivial cofibrations) are the degreewise split monomorphisms (resp. split monomorphisms with contractible cokernel) and the fibrations (resp. trivial fibrations) are the degreewise split epimorphisms (resp. split epimorphisms with contractible kernel). The weak equivalences are the usual homotopy equivalences. We note the following properties of this model structure:
\begin{enumerate}
\item The model structure is Frobenius.
\item The homotopy relation coincides with the usual notion of chain homotopy and two maps are chain homotopic if and only if their difference factors through a contractible complex.
\item $\text{HoCh}(R)_{dw} = \class{K}(R)$.
\item $\text{HoCh}(R)_{dw}(X,\Sigma^nY) \cong \Ext^n_{dw}(X,Y) \cong \text{HoCh}(R)_{dw}(\Sigma^{-n}X,Y)$, where here $\Ext^n_{dw}(X,Y)$ denotes the usual subfunctor of $\Ext^n(X,Y)$ consisting of the dimension-wise split exact sequences.
\end{enumerate}
\end{corollary}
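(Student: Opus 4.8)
The plan is to reduce everything to the machinery already in place --- Theorem~\ref{them-Hovey's theorem for exact categories}, Corollary~\ref{cor-Hovey's theorem for exact categories}, Lemma~\ref{lemma-characterizations of Frobenius model strucs}, Corollary~\ref{cor-characterizations of homotopic maps in inj. proj. Frob. model structures}, and Fact~\ref{fact-basic facts on homotopy and homotopy category} --- together with one classical input about contractible complexes, namely: \emph{in the exact category $\text{Ch}(R)_{dw}$ the projective objects, the injective objects, and the contractible complexes all coincide, and there are enough of each}. Throughout, $\class{A}$ denotes the class of all complexes and $\class{W}$ the class of contractible (i.e.\ split exact) complexes.

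First I would establish this input. The admissible monomorphisms (resp. epimorphisms) of $\text{Ch}(R)_{dw}$ are exactly the degreewise split monomorphisms (resp. epimorphisms), so an object $C$ is injective precisely when every degreewise split monomorphism $C \rightarrowtail Z$ splits. A contracting homotopy for $C$ produces such a retraction, so contractible complexes are injective; conversely, the canonical degreewise split monomorphism $X \rightarrowtail C(1_X)$ into the mapping cone of $1_X$ --- which is contractible --- admits a retraction only if $X$ is a summand of a contractible complex, forcing $1_X$ to be null-homotopic, so injectives are contractible. The projective case is dual. Finally $X \rightarrowtail C(1_X) \twoheadrightarrow \Sigma X$ and $\Sigma^{-1}X \rightarrowtail C(1_{\Sigma^{-1}X}) \twoheadrightarrow X$ are degreewise split short exact sequences with contractible middle term, which supplies ``enough injectives'' and ``enough projectives''.

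Granting the input, the cotorsion-pair axioms for $(\class{A},\class{W})$ and $(\class{W},\class{A})$ are immediate: axiom (1) is the injectivity/projectivity of contractible complexes, while axioms (2) and (3) are either vacuous or read ``injective $\Rightarrow$ contractible'' and ``projective $\Rightarrow$ contractible''. Completeness of both pairs follows as well: two of the four ``enough'' conditions are automatic (one class is everything and $0\in\class{W}$) and the other two are the cone sequences above. For thickness of $\class{W}$: a direct summand of a contractible complex is contractible (conjugate the contracting homotopy by the inclusion and projection), and $C$ is contractible iff $\HOM(X,C)$ is exact for every complex $X$ (if $1_C$ is null-homotopic then $\HOM(X,C)$ has null-homotopic identity; conversely take $X=C$ and read $[1_C]=0$ off $H_0\HOM(C,C)=0$) --- so applying $\HOM(X,-)$ to a degreewise split short exact sequence and invoking the long exact homology sequence transports the $2$-out-of-$3$ property for exactness to the $2$-out-of-$3$ property for $\class{W}$. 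Since $\text{Ch}(R)_{dw}$ is weakly idempotent complete, Corollary~\ref{cor-Hovey's theorem for exact categories} now produces the exact model structure; reading off $\class{Q}=\class{R}=\class{A}$ and $\class{Q}\cap\class{W}=\class{R}\cap\class{W}=\class{W}$ yields exactly the stated description of the (trivial) cofibrations and fibrations, and Lemma~\ref{lemma-characterizations of Frobenius model strucs}, applied to the pair of cotorsion pairs $(\class{A},\class{W})$ and $(\class{W},\class{A})$, gives property (1) and identifies $\class{W}$ with the pro-injectives. Because every object is cofibrant-fibrant, Fact~\ref{fact-basic facts on homotopy and homotopy category} gives ``weak equivalence $=$ homotopy equivalence'' and $\text{HoCh}(R)_{dw}(X,Y)=\Hom_{\text{Ch}(R)}(X,Y)/\sim$; by Corollary~\ref{cor-characterizations of homotopic maps in inj. proj. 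Frob. model structures} two maps are homotopic iff their difference factors through a contractible complex, and the classical observation that a chain map is null-homotopic iff it factors through $C(1_X)$ (match a null-homotopy with a factorization through the cone) identifies $\sim$ with chain homotopy. This gives (2), hence also ``weak equivalence $=$ chain homotopy equivalence'', and then (3) reads $\text{HoCh}(R)_{dw}(X,Y)=\Hom_{\text{Ch}(R)}(X,Y)/(\text{chain homotopy})=\class{K}(R)(X,Y)$, with composition inherited from $\text{Ch}(R)$.

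For (4), both $\Ext^1_{dw}(X,Y)$ and $\text{HoCh}(R)_{dw}(X,\Sigma Y)=\class{K}(R)(X,\Sigma Y)$ are naturally $H_{-1}\HOM(X,Y)=H_0\HOM(X,\Sigma Y)$: a degreewise split extension of $X$ by $Y$ is, modulo boundaries, the degree $-1$ ``twisting'' map measuring the failure of a degreewise splitting to be a chain map. For general $n$ I would pick a degreewise split short exact sequence $X'\rightarrowtail W\twoheadrightarrow X$ with $W$ contractible (from ``enough projectives''); applying $\HOM(-,\Sigma^{n}Y)$ and using exactness of $\HOM(W,\Sigma^{n}Y)$ shifts $\class{K}(R)(X,\Sigma^{n}Y)\cong\class{K}(R)(X',\Sigma^{n-1}Y)$, while the same sequence together with the derived-functor formalism for the exact category $\text{Ch}(R)_{dw}$ shifts $\Ext^{n}_{dw}(X,Y)\cong\Ext^{n-1}_{dw}(X',Y)$, so induction reduces to the case $n=1$. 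The remaining isomorphism $\Ext^{n}_{dw}(X,Y)\cong\text{HoCh}(R)_{dw}(\Sigma^{-n}X,Y)$ then follows since $\Sigma$ is an autoequivalence of the triangulated category $\class{K}(R)$. The step I expect to be the main obstacle is (4): setting up $\Ext^{n}_{dw}$ cleanly as the derived functor in $\text{Ch}(R)_{dw}$, checking that its dimension-shift isomorphism matches the shift isomorphism on the $\HOM$-complex side, and pinning down the signs in the base case $n=1$. Everything else is either classical or a direct application of the results recalled above.
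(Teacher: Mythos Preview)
Your proposal is correct and follows essentially the same route as the paper: identify the contractible complexes as the pro-injectives of $\text{Ch}(R)_{dw}$ with enough of each via the cone sequences $X\rightarrowtail C(1_X)\twoheadrightarrow\Sigma X$, feed the resulting cotorsion pairs into Corollary~\ref{cor-Hovey's theorem for exact categories}, and read off (1)--(3) from Lemma~\ref{lemma-characterizations of Frobenius model strucs}, Corollary~\ref{cor-characterizations of homotopic maps in inj. proj. Frob. model structures}, and the classical null-homotopy/cone-factorization fact. The only notable deviations are in subsidiary steps: for ``contractible $\Rightarrow$ injective'' the paper observes that any degreewise split mono $W\rightarrowtail Z$ identifies $Z$ with a mapping cone $C(f)$ for some $f\colon\Sigma^{-1}Y\to W$, and contractibility of $W$ forces $f$ to be null-homotopic so the mono splits --- a cleaner argument than constructing the retraction directly from the contracting homotopy; and for (4) the paper simply invokes the general fact that $\Ext_{dw}$, as the derived functor of $\Hom$ in an exact category with enough pro-injectives, is computed by maps in the homotopy category, whereas you spell out the dimension shift by hand.
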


\begin{proof}
We leave it to the reader to show that the class $\class{W}$ of contractible complexes is closed under direct summands and satisfies the two out of three property in $\text{Ch}(R)_{dw}$ making $\class{W}$ a thick subcategory of $\text{Ch}(R)_{dw}$.

Recall that by definition, an object $I$ in an exact category $(\class{A},\class{E})$ is injective if any admissible monomorphism $I \rightarrowtail Z$ splits (has a left inverse). Dual for projectives. We argue that $\class{W}$ are the pro-injective objects in $\text{Ch}(R)_{dw}$. To see this, we will show $\class{W}$ are the injective objects. A dual argument will show they are also the projective objects. Let $W \in \class{W}$ and consider an admissible monomorphism $W \rightarrowtail Z$. Then this is a degreewise split monomorphism and one can see that $Z$ is actually isomorphic to the mapping cone $C(f)$ for some chain map $f \mathcolon \Sigma^{-1}Y \ar W$. But since $W$ is contractible, $f$ is null homotopic and this implies that $W \rightarrowtail Z$ is a split monomorphism. On the other hand, suppose $I$ is injective, so that any admissible monomorphism $I \rightarrowtail Z$ in $\text{Ch}(R)_{dw}$ is a split mono. Then in particular, the short exact sequence $0 \ar I \ar C(1_I) \ar \Sigma I \ar 0$ is an admissible mono and must split. Thus $I$ is a direct summand of $C(1_I)$. But since the mapping cone of an identity map is always contractible (exercise~1.5.2 of~\cite{weibel}) we see that $I$ is a direct summand of a contractible complex. Therefore $I$ is also contractible.

Note that the mapping cone construction $C(1_W)$ also shows that there are enough pro-injective objects in $\text{Ch}(R)_{dw}$. It follows that both $(\class{A},\class{W})$ and $(\class{W},\class{A})$ are complete cotorsion pairs in $\text{Ch}(R)_{dw}$. By Corollary~\ref{cor-Hovey's theorem for exact categories} and Proposition~\ref{cor-characterizations of homotopic maps in inj. proj. Frob. model structures} they induce a Frobenius model structure with (trivial) cofibrations and (trivial) fibrations as we described. Note that Proposition~\ref{cor-characterizations of homotopic maps in inj. proj. Frob. model structures} says that two chain maps $f,g \mathcolon X \ar Y$ are homotopic in this model structure if and only if $g-f$ factors through a contractible complex. This corresponds to the usual fact that a map $X \ar Y$ is null homotopic if and only if it extends to a map from $C(1_X)$ to $Y$ as in exercise~1.5.2 of~\cite{weibel}. So the homotopy relation in this model category coincides with the usual notion of chain homotopy equivalence.

Now $\Ext_{dw}$ is the usual derived functor of $\Hom_{\text{Ch}(R)_{dw}}$ and so $\Ext_{dw}$ can be computed as maps in $\text{HoCh}(R)_{dw}$. In particular, we have the isos $\Ext^n_{dw}(X,Y) \cong \text{HoCh}(R)_{dw}(QX,R\Sigma^nY) \cong \text{HoCh}(R)_{dw}(X,\Sigma^nY)\cong \text{Ch}(R)(X,\Sigma^nY)/\sim$.
\end{proof}


\end{document}